\newtheorem{theorem}{Theorem}[section]
\newtheorem{lemma}[theorem]{Lemma}
\newtheorem{proposition}[theorem]{Proposition}
\theoremstyle{definition}
\newtheorem{definition}[theorem]{Definition}
\newtheorem{example}[theorem]{Example}
\newtheorem{remark}[theorem]{Remark}
\numberwithin{equation}{section}
\def\C{{\mathbb C}}
\def\XX{{\mathbb X}}
\def\YY{{\mathbb Y}}
\def\Hom{\mathrm{Hom}}
\def\F{{\mathscr F}}
\def\GL{\mathrm{GL}}
\def\g{{\mathfrak g}}
\def\Y{{\mathcal Y}}
\def\E{{\mathscr E}}
\def\X{{\mathscr X}}
\def\Yy{{\mathscr Y}}
\def\BB{{\mathscr B}}
\def\rra{\rightrightarrows}
\def\mb{\mathbb}
\def\mc{\mathcal}
\def\og{\leavevmode\raise.3ex\hbox{$\scriptscriptstyle\langle\!\langle$~}}
\def\fg{\leavevmode\raise.3ex\hbox{~$\!\scriptscriptstyle\,\rangle\!\rangle$}}
\begin{document}
	
	\title[Atiyah sequences and connections for bundles on Lie groupoids]
	{Atiyah sequences and connections on principal bundles over Lie groupoids and differentiable stacks}
	
	\author[I. Biswas]{Indranil Biswas}
	
	\address{School of Mathematics,
		Tata Institute of Fundamental Research,
		Homi Bhabha Road, Mumbai 400005, India}
	\email{indranil@math.tifr.res.in}
	
	\author[S. Chatterjee]{Saikat Chatterjee}
	
	\address{School of Mathematics,
		Indian Institute of Science Education and Research Thiruvananthapuram,
		Maruthamala P.O., Vihtura, Kerala 695551, India}
	\email{saikat.chat01@gmail.com}
	
	\author[P. Koushik]{Praphulla Koushik}
	
	\address{School of Mathematics,
		Indian Institute of Science Education and Research Thiruvananthapuram,
		Maruthamala P.O., Vihtura, Kerala 695551, India}
	\email{koushik16@iisertvm.ac.in}
	
	\author[F. Neumann]{Frank Neumann}
	
	\address{Pure Mathematics Group, School of Computing and Mathematical Sciences, University of Leicester, University Road, Leicester LE1 7RH, England, UK}
	\email{fn8@le.ac.uk}
	
	\subjclass[2010]{Primary 53C08, Secondary 22A22, 58H05, 53D50}
	
	\keywords{Atiyah exact sequence, principal bundles, Lie groupoids, connections, differentiable stacks}
	
	
\begin{abstract}
We construct and study both general and integrable connections on Lie groupoids and differentiable stacks, as well as on 
principal bundles over them using an Atiyah exact sequence of vector bundles associated to transversal tangential 
distributions.
\end{abstract}
	
	\maketitle
	
	\addtocontents{toc}{\setcounter{tocdepth}{1}} 
	\tableofcontents
	
	\section{Introduction}\label{Intro}

	We develop a general theory of connections for principal bundles over Lie groupoids and 
	differentiable stacks using Atiyah exact sequences associated to transversal tangential distributions. The constructions presented here
	are inspired by the classical work of Atiyah \cite{At} on connections for fiber bundles in complex geometry. 
	
Given a Lie groupoid $\XX \,:=\,[X_1\rightrightarrows X_0]$ with both $X_0$ and $X_1$ smooth manifolds, such that the 
source map $s$ is a submersion, we introduce connections on $\XX$ as a distribution ${\mathcal H}\,\subset\, TX_1$ 
transversal to the fibers of the source map $s$. Such a connection is said to be integrable or flat if, in addition, 
the corresponding distribution $\mathcal H$ is integrable. These connections (respectively, flat connections) also 
give rise to connections (respectively, flat connections) on differentiable stacks under certain compatibility conditions.  For 
the particular case of Deligne--Mumford stacks, which are presented by \'etale Lie groupoids and include descriptions 
for orbifolds and foliations, such a connection always exists. Indeed, for a Deligne--Mumford stack a natural tangential 
distribution is given by the tangent bundle itself.

Now given a Lie group $G$ and a principal $G$-bundle over a Lie 
groupoid $\XX\, :=\,[X_1\rightrightarrows X_0]$ equipped with such a connection ${\mathcal H}\,\subset\, TX_1$, we can 
define the notion of a connection on the principal $G$-bundle. A principal $G$-bundle over the Lie groupoid $\XX$ is 
basically given by a principal $G$-bundle $\alpha\,:\, E_G\,\longrightarrow\, X_0$ and some extra compatibility data 
reflecting the groupoid structure. More precisely, a connection for a principal $G$-bundle over $\XX$ corresponds to 
a splitting of the associated Atiyah exact sequence of vector bundles \cite{At}
$$0\,\longrightarrow\, \text{ad}(E_G)\,\longrightarrow\,\text{At}(E_G)\,\longrightarrow\,
TX_0 \,\longrightarrow\, 0$$
which satisfies the condition of being compatible with the various data of the structures involved;
the details are in Section \ref{Bundles}. This then 
allows us to define the associated curvature and characteristic differential forms for connections on principal 
bundles over Lie groupoids.
	
Furthermore, using adequate groupoid presentations, these constructions extend to a framework that enables us to 
define and study connections and characteristic forms for principal $G$-bundles over differentiable stacks. In the 
particular case of Deligne-Mumford stacks, which are represented by \'etale Lie groupoids, we develop the theory of 
connections intrinsically and construct the Atiyah exact sequence out of the stack data. This relies on the fact that in 
the case of Deligne-Mumford stacks, the associated Atiyah exact sequence is again a sequence of vector bundles in a natural 
way. These constructions also corresponds to related ones in \cite{BMW} for the algebraic geometrical context 
(compare also with \cite{LM}). Though we will work throughout this article mainly in the differentiable setting, we 
remark that most of the concepts and constructions presented here also work equally well in the holomorphic and 
algebraic geometrical setting for the differential geometry of complex analytic and algebraic stacks. An associated 
Chern-Weil theory of characteristic classes for our setting is developed in a related article by the authors 
\cite{BCKN2}. Some of the constructions and results presented here were announced earlier in \cite{BN}.
	
Versions of connections and flat connections on differentiable, complex analytic, and algebraic stacks were also 
introduced using cofoliations on stacks by Behrend \cite{B1}. Independently, Tang in \cite{T} defined in a similar 
fashion flat connections for Lie groupoids, which he called \'etalizations. More recently, Arias Abad and Crainic in 
\cite{AC} introduced general Ehresmann connections for Lie groupoids and relate them to their framework of homotopy 
representations of Lie groupoids. Connections for principal bundles over Lie groupoids using pseudo-connection forms 
were studied earlier also by Laurent-Gengoux, Tu, and Xu \cite{LTX}. They also describe the associated Chern-Weil 
theory. Herrera and Ortiz, \cite{HO}, have informed us that they are currently also developing a similar theory for 
principal 2-bundles over Lie groupoids involving Atiyah LA-groupoids. We refer the reader also to other related work 
on the differential geometry of Lie groupoids and differentiable stacks \cite{FN, TXL, CLW, P, Tr, DE}.
	
	\noindent{\bf Outline and organization of the article.} In the first section (Section \ref{Groupoides}), we introduce the category of smooth spaces over which our stacks and groupoids are constructed and present the basic notions of fibered categories, stacks, and groupoids. We also discuss the relation between differentiable stacks on the one side and Lie groupoids on the other. In the following section (Section \ref{Groupoides2}), we study principal bundles over differentiable stacks and Lie groupoids and their categorical interplay. In Section 
	\ref{Connections} we introduce notions and basic properties of connections on Lie groupoids and differentiable stacks using vertical tangential distributions and compare our constructions with other existing frameworks in the literature. We also 
	construct characteristic differential forms for our general connections. In the final section (Section 
	\ref{Bundles}), we define and study connections on Lie groupoids as splittings of associated Atiyah exact sequences. 
	Finally, we apply the theory to study connections on principal bundles over differentiable stacks and give 
	general characterizations for connections on principal bundles over Deligne-Mumford stacks, which corresponds to 
	\'etale Lie groupoids. 
	
	\section{Smooth spaces, groupoids, and stacks}\label{Groupoides}
	
	In this section, we will recall the main notions and constructions needed for our set-up. For a background on the 
	theory of stacks and its main properties we refer to \cite{B1, BX, Fa, H, N}.
	
	\subsection{Smooth spaces and fibered categories}
	
We shall refer to any of
\begin{itemize}
\item the category of ${\mathcal C}^{\infty}$-manifolds,

\item the category of complex analytic manifolds, and

\item the category of smooth schemes of finite type over the field of complex numbers,
\end{itemize}
as the {\it category $\mathfrak S$ of smooth spaces and smooth maps}. The tangent bundle of any smooth space $X$ will be denoted by $TX$. A {\it smooth space} will mean an object in $\mathfrak S$, and a {\it smooth map} will refer to a morphism in $\mathfrak S$ which is a submersion, and by a submersion, we mean a smooth map whose differential restricted to $T_xX$ is surjective for every point $x$ of the domain $X$. An {\it \'etale map} is a 
	smooth immersion in $\mathfrak S$ (so it is also a submersion).
	
	For a smooth space $X$, the structure sheaf of it will be denoted by ${\mathcal O}_X$. A vector bundle on $X$ will 
	be identified with its sheaf of sections, which is a finitely generated locally free sheaf of ${\mathcal 
		O}_X$-modules. The cotangent bundle of $X$ will be denoted by $\Omega^1_X\,=\,T^*X$, and the $p$-th exterior power 
	of $T^*X$ will be denoted by $\Omega^p_X\,:=\,\bigwedge^p T^*X$. An {\it integrable distribution} on a smooth 
	space $X$ is a subbundle ${\mathcal H}\,\subset\,TX$ of the tangent bundle of $X$ such that its
	annihilator ${\mathcal H}^{\bot}$ 
	generates an ideal in $\bigoplus_{k\geq 0} \Omega^k_X$ which is preserved by the exterior differential of the de Rham 
	complex; equivalently, ${\mathcal H}$ is closed under the Lie bracket operation on vector fields.
	
	The {\it big \'etale site} ${\mathfrak S}_{et}$ on the category $\mathfrak S$ is given by the following 
	Grothendieck topology on $\mathfrak S$. We call a family $\{U_i\longrightarrow X\}$ of morphisms in $\mathfrak S$ 
	with target $X$ a {\it covering family} of $X$, if all smooth maps $U_i\,\longrightarrow \, X$ are \'etale and 
	the total map from the disjoint union $$\coprod_i U_i\longrightarrow X$$ is surjective. This defines a 
	pretopology on $\mathfrak S$ generating a Grothendieck topology, which is known as the {\it big 
		\'etale topology} on $\mathfrak S$ (compare \cite[Expos\'e II]{SGA4} and \cite{Vi}). If either or both of two 
	morphisms $U\,\longrightarrow\, X$ and $V\,\longrightarrow\, X$ in $\mathfrak S$ is a submersion, then their 
	fiber product $U\times_X V\,\longrightarrow\, X$ exists.
	
	\begin{definition}[{Groupoid fibration}]\label{def1}
		A {\it groupoid fibration} over $\mathfrak S$ is a category $\mathscr X$, together with a functor
		$$\pi_{\mathscr X}\,:\,\mathscr X\,\longrightarrow\, \mathfrak S$$ satisfying the following axioms:
		\begin{itemize}
			\item[(i)] For every morphism $V\longrightarrow U$ in $\mathfrak S$, and
			every object $x$ of $\mathscr X$ lying over $U$, there exists an
			arrow $y\longrightarrow x$ in $\mathscr X$ lying over $V\longrightarrow U$.
			
			\item[(ii)] For every commutative triangle $$W\,\longrightarrow\, V\,\longrightarrow\, U$$
			in $\mathfrak S$ and all morphisms $z\,\longrightarrow\, x$
			and $y\,\longrightarrow\, x$ in $\mathscr X$ lying over
			$W\,\longrightarrow \,U$ and $V\,\longrightarrow \,U$ respectively,
			there exists a unique arrow $z\,\longrightarrow\, y$ in $\mathscr X$ lying over
			$W\,\longrightarrow\, V$ such that the composition $$z\,\longrightarrow\,
			y\,\longrightarrow \,x$$ is the morphism $z\,\longrightarrow\, x$.
		\end{itemize}
	\end{definition}
	
	The condition (ii) in Definition \ref{def1} ensures that the object $y$ over $V$, which exists
	by Definition \ref{def1}(i), is unique up to a unique isomorphism. Any choice of such
	an object $y$ is called a {\it pullback} of $x$ via the morphism $f\,:\,
	V\,\longrightarrow\, U$. We will write as usual $y\,=\,x\vert_V$ or $y\,=\,f^*x$.
	
	Let $\mathscr X$ be a groupoid fibration over $\mathfrak
	S$. The subcategory of $\mathscr X$ consisting of all objects lying over a
	fixed object $U$ of $\mathfrak S$ with the morphisms being those lying
	over the identity morphism $id_U$ is called the {\it fiber} or {\it
		category of sections} of $\mathscr X$ over $U$. The fiber
	of $\mathscr X$ over $U$ will be denoted by ${\mathscr X}(U)$. 
	
	Groupoid fibrations over $\mathfrak S$ form a 2-category in which fiber products exist (see \cite{Gr, Vi}). For two groupoid fibrations	
	$$
	\pi_{\mathscr X}\,:\, {\mathscr X}\,\longrightarrow\, \mathfrak S\ ~\ \text{ and }\ ~\
	\pi_{\mathscr Y}\,:\, {\mathscr Y}\,\longrightarrow\, \mathfrak S\, ,
	$$
	the $1$-morphisms from ${\mathscr X}$ to ${\mathscr Y}$ are given by functors $\phi\,:\,
	{\mathscr X}\,\longrightarrow\, {\mathscr Y}$ such that
	$$\pi_{\mathscr Y}\circ \phi \,=\, \pi_{\mathscr X}\, .$$
	The $2$-morphisms are given by natural transformations between these
	projection functor preserving functors.
	
	\begin{example}
		Let $F\,:\, {\mathfrak S}\,\longrightarrow\, (Sets)$ be a presheaf, meaning
		a contravariant functor. We get a groupoid fibration ${\mathscr X}$, where the objects are pairs of the form $(U,\, x)$, 
		with $U$ a smooth space and $x\,\in\, F(U)$, while a morphism $(U, \,x)\,\longrightarrow
		\,(V, \,y)$ is a smooth map $f\,:\, U \,\longrightarrow \,V$ such that $x
		\,=\,y\vert_{F(U)}$, equivalently, $x\,=\,F(f)(y)$. The projection functor is given
		by $$\pi\,:\, {\mathscr X} \,\longrightarrow\, {\mathfrak S}\, ,\, \,\ (U,\, x)
		\,\longmapsto\, U\, .$$
		
		Therefore, any sheaf $F\,:\, {\mathfrak S}\,\longrightarrow\, (Sets)$ gives
		a groupoid fibration over $\mathfrak S$. In
		particular, every smooth space $X$ gives a groupoid fibration $\underline X$ over $\mathfrak S$ as the sheaf represented
		by $X$, in other words,
		$${\underline X}(U)\,=\, Hom_{\mathfrak S}(U,\, X)\, .$$
		To simplify notation, we will identify $\underline X$ with
		the smooth space $X$ without further clarification.
	\end{example}
	
	A category $\mathscr X$ fibered in groupoids over $\mathfrak S$ is
	{\it representable} if there exists a smooth space $X$
	isomorphic to $\mathscr X$ as groupoid fibrations
	over $\mathfrak S$. We call a morphism of groupoid fibrations $\mathscr X \,\longrightarrow\, \mathscr Y$ a {\it representable
		submersion} if for every smooth space $U$ and every morphism
	$U\,\longrightarrow\, {\mathscr Y}$, the fiber product ${\mathscr
		X}\times_{\mathscr Y} U$ is representable, and the induced morphism
	of smooth spaces ${\mathscr X}\times_{\mathscr Y} U \,\longrightarrow
	\,U$ is a submersion.
	
	\subsection{Stacks over smooth spaces}
	Now let us recall the definition of a stack \cite{B1, BX}. We clarify that a stack $\mathscr X$ here will 
	always mean a stack over the big \'etale site ${\mathfrak S}_{et}$ of smooth spaces.
	
\begin{definition}[{Stack}]\label{ds} A groupoid fibration
$\mathscr X$ over $\mathfrak S$ is a {\it stack}
if the following gluing axioms hold with respect to the site ${\mathfrak S}_{et}$:
\begin{itemize}

\item[(i)] Take any smooth space $X$ in $\mathfrak S$, any two objects
$x,\, y$ in ${\mathscr X}$ lying over $X$ and any two isomorphisms $$\phi\, ,\, \psi
\,:\, x\,\longrightarrow\, y$$ over $X$. If the condition $\phi\vert_{U_i}
\,=\,\psi\vert_{U_i}$ holds for all $U_i$ in a covering $\{U_i\longrightarrow X\}$,
then $\phi\,=\,\psi$.

\item[(ii)] Take any smooth space $X$ in $\mathfrak S$, any two objects
$x\, , y \,\in \,{\mathscr X}$ lying over $X$ and any covering
$\{U_i\longrightarrow X\}$ with isomorphisms $$\phi_i\,:\,
x\vert_{U_i}\,\longrightarrow\, y\vert_{U_i}$$ for every $i$. If the condition
$\phi_i\vert_{U_{ij}}\,=\, \phi_j\vert_{U_{i j}}$ holds for
all $i,\,j$, then there exists an isomorphism $\phi\,:\, x\,\longrightarrow\, y$
with $\phi\vert_{U_i}\,=\,\phi_i$ for all $i$.

\item[(iii)] For any smooth space $X$ in $\mathfrak S$, any covering
$\{U_i\longrightarrow X\}$, any family $\{x_i\}$ of objects $x_i$ in the
fiber ${\mathscr X}_{U_i}$ and any family of morphisms
$\{\phi_{ij}\}$, where $$\phi_{ij}\,:\, x_i\vert_{U_{ij}}\, \longrightarrow\,
x_j\vert_{U_{ij}}$$ satisfies the cocycle condition $\phi_{jk}\circ
\phi_{ij}\,=\,\phi_{ik}$ in ${\mathscr X}(U_{ijk})$, there exists an
object $x$ lying over $X$ with isomorphisms $$\phi_i\,:\,
x\vert_{U_i}\,\longrightarrow\, x_i$$ such that $\phi_{ij}\circ \phi_i
\,=\,\phi_j$ in ${\mathscr X}(U_{ij})$.
\end{itemize}
\end{definition}

The isomorphism $\phi$ in Definition \ref{ds}(ii) is unique by Definition \ref{ds}(i). Similarly, from Definition 
\ref{ds}(i) and Definition \ref{ds}(ii), it follows that the object $x$ whose existence is asserted in Definition 
\ref{ds}(iii) is unique up to a unique isomorphism. All pullbacks mentioned in Definition \ref{ds} are only unique up 
to isomorphism, but the properties do not depend on particular choices.

In order to be able to do geometry on stacks, we need to compare them with
smooth spaces and extend the geometry to stacks. 
From now on, we will restrict ourselves to stacks over the category
$\mathfrak S$ of ${\mathcal C}^{\infty}$-manifolds, but we remark that we have incarnations of the analogue
concepts and constructions for the category $\mathfrak S$ of complex analytic manifolds as well as smooth
schemes of finite type over the complex numbers.

\begin{definition}[{Differentiable stack}]
A stack $\mathscr X$ over the site $\mathfrak S_{et}$ is called {\it differentiable}
if there exists a smooth space $X$ in $\mathfrak S$ and a
surjective representable submersion $$x\,:\, X\,\longrightarrow\, {\mathscr X},$$ i.e., 
there exists a smooth space $X$ together with a morphism of
stacks $$x\,:\, X\,\longrightarrow\, {\mathscr X},$$ such that for every smooth
space $U$ and every morphism of stacks $U\,\longrightarrow\, {\mathscr X}$, the following two hold:
\begin{enumerate}
\item the fiber product $X\times_{\mathscr X} U$ is representable, and
			
\item the induced morphism of smooth spaces $X\times_{\mathscr X} U\,
\longrightarrow\, U$ is a surjective submersion.
\end{enumerate}
\end{definition}
	
	If $\mathscr X$ is a differentiable stack, such a surjective representable submersion $x\,:\,
	X\,\longrightarrow\, {\mathscr X}$ is called a {\it presentation} or {\it atlas} for the stack
	$\mathscr X$. It need not be unique, in other words, a differentiable stack can have different
	presentations.
	
	\begin{definition}
		A differentiable stack $\mathscr X$ is a {\it (proper) Deligne--Mumford stack} if it has a (proper)
		\'etale presentation.
	\end{definition}
	
	Orbifolds correspond to proper Deligne--Mumford stacks (see \cite{B1, LM}). 
	
	The incarnations of these concepts over the site $\mathfrak S_{et}$ of complex analytic manifolds
	respectively, smooth schemes
	of finite type over $\C$ will be referred to as complex analytic stacks, respectively, algebraic stacks.
	
	\subsection{Lie groupoids and differentiable stacks} Differentiable stacks are also incarnations of Lie groupoids, 
	as we will recall now in detail (see also \cite{BX, MDH}).
	
	\begin{definition}[Lie groupoid]
		A {\it Lie groupoid} $\mathbb X\,=\,[X_1\rightrightarrows X_0]$ is a groupoid
		internal to the category $\mathfrak S$ of smooth spaces, meaning the space $X_1$ of
		arrows and the space $X_0$ of objects are objects of $\mathfrak S$
		and all structure morphisms
		$$s,\, t\,:\, X_1\,\longrightarrow\, X_0\, , \ \ m\,:\,
		X_1\times_{s,X_0,t} X_1\longrightarrow X_1\, ,
		$$
		$$
		i\,:\, X_1\,\longrightarrow\, X_1\, ,\ \ e\,:\, X_0\,\longrightarrow \,X_1
		$$
		are morphisms in $\mathfrak S$ (so they are smooth maps). Here $s$ is the source map, $t$ is the target map, $m$ is the multiplication map, $e$ is the identity section, and $i$ is the inversion map of the groupoid. The source map $s$ is a submersion. Using $i$, this implies that the target map $t$ is also a submersion.
		
		If $s$ and $t$ are \'etale, the groupoid $\XX\,=\,[X_1\rightrightarrows X_0]$ is
		called {\it \'etale}. If the {\it anchor map}
		$$(s,\, t)\,:\, X_1\,\longrightarrow\, X_0\times X_0$$ is {\it proper}, the groupoid
		is called a {\it proper} groupoid. If the
		
		A {\it Lie group} $G$ is a Lie groupoid $[G\,\rightrightarrows\, *]$ with one
		object, meaning the space $X_0$ is just a point in $\mathfrak S$.
	\end{definition}
	
	Every Lie groupoid $\XX\,=\,[X_1\rightrightarrows X_0]$ gives rise to the associated tangent
	groupoid $T\XX\,:=\,[TX_1\rightrightarrows TX_0]$. 
	
	\begin{example}
		Let $X$ be a smooth space. The groupoid fibration
		$\underline X$ is, in fact, a differentiable stack over $\mathfrak S$.
		A presentation is given by the identity morphism $id_X$.
	\end{example}
	
	\begin{example}[Classifying stack]\label{ex1}
		For a Lie group $G$, let ${\mathscr B}G$ be the category which
		has as objects all pairs $(P,\, S)$, where $S$ is a smooth space of
		$\mathfrak S$ and $P$ is a principal $G$-bundle over $S$; a
		morphism $(P,\, S)\,\longrightarrow\, (Q,\, T)$ is a commutative diagram
		\[
		\xy \xymatrix{ P\ar[r]^{\varphi}\ar[d]&
			Q\ar[d]\\
			S\ar[r]& T}
		\endxy
		\]
		where $\varphi\,:\, P\,\longrightarrow\, Q$ is a $G$-equivariant map.
		Note that the above diagram is Cartesian.
		Then ${\mathscr B}G$ together with the projection functor $$\pi:
		{\mathscr B}G\,\longrightarrow \,{\mathfrak S}\, ,\ ~ (P,\, S)\,\longmapsto\, S$$
		is a groupoid fibration over $\mathfrak S$. We note that ${\mathscr B}G$ is
		in fact a differentiable stack, and it is known as the {\it classifying stack} of $G$ (see also Example \ref{BX stack} below).
		A presentation is given by the representable surjective submersion
		$*\,\longrightarrow\, {\mathscr B}G$ where $*$ is a ``point" in $\frak{S}$.	
	\end{example}
	
	\begin{definition}\label{defXs}
		Let ${\mathbb X}\,=\,[X_1\rightrightarrows X_0]$ be a Lie groupoid. A \textit{(left) 
			$\mathbb X$-space} is given by an object $P$ of $\mathfrak S$ together with a
		smooth map $\pi\,:\, P\,\longrightarrow\, X_0$ and a map $$\sigma\,:\, Q\,:=\,
		X_1\times_{s, X_0, \pi} P \,
		\longrightarrow\, P\, , \ \ ~\sigma (\gamma,\, x)\,:=\,\gamma\cdot x$$
		such that,
\begin{itemize}
\item[(i)] $\pi(\gamma\cdot x)\,=\,t(\gamma)$ for all $(\gamma,\,x)\,\in\, X_1\times_{s,X_0,\pi}P$,

\item[(ii)] $e(\pi(x))\cdot x\,=\,x$ for all $x\,\in\, X_1$, and

\item[(iii)] $(\delta \cdot \gamma)\cdot x\,=\, \delta\cdot (\gamma \cdot x)$ for all 
$(\gamma,\, \delta,\,x)\, \in\, X_1\times_{s,X_0,t}X_1\times_{s,X_0,\pi}P$.
\end{itemize}
Similarly, we can also define a \textit{(right) $\mathbb X$-space}.
\end{definition}
	
	Let $\mathscr X$ be a differentiable stack with a given
	presentation $x\,:\, X\,\longrightarrow\, {\mathscr X}$. We can associate to
	$\mathscr X$ a Lie groupoid ${\mathbb X}\,=\,[X_1\rightrightarrows
	X_0]$ as follows: Let $X_0\,:=\,X$ and $X_1\,:=\,X\times_{\mathscr X} X$ and the source and
	target morphisms $$s,\,\,t\,:\, X\times_{\mathscr X} X\,\rightrightarrows\, X$$
	of $\mathbb X$ being the first and second canonical projection morphisms.
	The composition of morphisms $m$ in $\mathbb X$ is given as
	projection to the first and third factor
	$$X\times_{\mathscr X} X\times_{\mathscr X} X \,\cong\, (X\times_{\mathscr X} X)\times_X
	(X\times_{\mathscr X} X)\,\longrightarrow \,X\times_{\mathscr X} X\, .$$ The
	morphism $X\times_{\mathscr X} X\,\longrightarrow\,
	X\times_{\mathscr X} X$ that interchanges the two factors gives the inverse morphism $i$
	for the groupoid, while the unit morphism
	$e$ is given by the diagonal morphism $X\,\longrightarrow\, X\times_{\mathscr X} X$.
	As a presentation $x\,:\, X\,\longrightarrow \,{\mathscr X}$ of a
	differentiable stack is a submersion, it follows that the source and
	target morphisms $$s,\,\, t\,:\, X\times_{\mathscr X} X\,\rightrightarrows\, X,$$
	being induced maps from the fiber product, are also submersions.
	
	In the opposite direction, given a Lie groupoid $\mathbb X$ we can associate a differentiable stack $\mathscr X$
	to it. Basically this is a generalization of associating to a Lie group $G$ its classifying stack ${\mathscr
		B}G$ (see Example \ref{ex1}). For this we now define (compare also \cite{BX}):
	
\begin{definition}[{Torsors}]
		Let ${\mathbb X}\,=\, [X_1\rightrightarrows X_0]$ be a Lie groupoid, and let
		$S$ be a smooth space. A {\it (right)} $\mathbb X$-{\it torsor over} $S$ is a smooth
		space $P$ together with a surjective submersion $\pi\,:\,
		P\,\longrightarrow\, S$ and a right action of $\mathbb X$ on $P$ (see
		the second part of Definition \ref{defXs}) satisfying the condition that $\pi$ is
		$\mathbb{X}$-invariant, i.e., $\pi(\gamma\cdot p)\,=\,\pi(p)$ for all $(\gamma,\,x)
\,\in\, X_1\times_{s,X_0,a}P$ and the extra condition that for
all $p,\,\, p'\,\in\, P$ with $\pi(p)\,=\,\pi(p')$, there exists a unique $x\,\in\,
X_1$ such that $p\cdot x$ is defined and $p\cdot x\,=\,p'$.
		
		Let $\pi\,:\,
		P\,\longrightarrow\, S$ and $\rho\,:\, Q\,\longrightarrow\, T$ be
		$\mathbb X$-torsors. A {\it morphism} of $\mathbb X$-torsors from the
		first one to the second one is given by a Cartesian diagram of smooth maps
		\begin{equation}\label{f1}
		\xy \xymatrix{ P\ar[r]^\varphi\ar[d]&
			Q\ar[d]\\
			S\ar[r]& T}
		\endxy
		\end{equation}
		such that $\varphi$ is a $X_1$-equivariant map. Again, there is also a similar notion of a {\it (left)}
		$\mathbb X$-{\it torsor} over a smooth space $S$.
		
		Given a $\mathbb X$-torsor $P$, the map $a:P\,\longrightarrow\, X_0$ is called
		the {\it anchor map} or {\it momentum map}. The surjective submersion $\pi\,:\, P\,
		\longrightarrow \,S$ is called the {\it structure map}.
	\end{definition}
	
	For a Lie groupoid ${\mathbb X}\,=\,[X_1\rightrightarrows X_0]$, let
	${\mathscr B}{\mathbb X}$ denote the category of $\mathbb X$-torsors,
	meaning the category whose objects are pairs $(P,\, S)$, where $S$ is a smooth space
	and $P$ a $\mathbb X$-torsor over $S$. The morphisms $(P,\, S)\,\longrightarrow\,
	(Q,\, T)$ are given by Cartesian diagrams as in \eqref{f1}. Then
	${\mathscr B}{\mathbb X}$ is a groupoid fibration over $\mathfrak
	S$ with a canonical projection functor
	$$\pi\,:\, {\mathscr B}{\mathbb X}\,\longrightarrow\, {\mathfrak S}\, , \
	~ (P,\, S)\,\longmapsto\, S\, .$$
	It turns out that ${\mathscr B}{\mathbb X}$ is, in fact, a differentiable stack, the
	{\it classifying stack of $\mathbb X$-torsors}. The proof of
	\cite[Proposition 2.3]{BX} works in any of the three categories $\mathfrak S$.
	
	\begin{example}\label{BX stack}
		In the case where the Lie groupoid ${\mathbb X}\,=\,[G\rightrightarrows *]$ is a
		Lie group, a $\mathbb X$-torsor over a smooth space $S$ is simply a $G$-torsor or
		principal $G$-bundle over $S$. The associated classifying stack is the classifying stack
		${\mathscr B}G$ of the Lie group $G$.
	\end{example}
	
	As presentations for a given differentiable stack are not unique, the associated Lie groupoids might be different. In order to define
	algebraic and geometric invariants for differentiable stacks, like cohomology or differential forms,
	they should, however, be independent from the choice of
	a presentation for the given stack. Therefore, it is important to know under which conditions
	two different Lie groupoids give rise to isomorphic differentiable stacks.
	
	\begin{definition}\label{dme}
		Let ${\mathbb X}\,=\,[X_1\rightrightarrows X_0]$ and ${\mathbb
			Y}\,=\,[Y_1\rightrightarrows Y_0]$ be Lie groupoids. A {\it morphism} of Lie groupoids is a functor $\phi\,:\, {\mathbb X}\,\longrightarrow\,{\mathbb Y}$ given by two smooth maps $\phi\,=\,(\phi_1,\, \phi_0)$ with
		$$\phi_0\,:\, X_0\,\longrightarrow\, Y_0\, , \ \ \phi_1\,:\, X_1\,\longrightarrow\, Y_1$$
		which commute with all structure morphisms of the groupoids. 
		
		A morphism $\phi\,:\, {\mathbb X}\,\longrightarrow\, {\mathbb Y}$ of Lie groupoids 
		is an {\it \'etale morphism}, if both maps $\phi_0\,:\, X_0\,\longrightarrow\, Y_0$ and $\phi_1\,:\, X_1\,\longrightarrow\, Y_1$ are \'etale.
		
		A morphism $\phi\,:\, {\mathbb X}\,\longrightarrow\, {\mathbb Y}$ of Lie groupoids is a {\it Morita morphism} or {\it essential equivalence} if
		\begin{itemize}
			\item[(i)] $\phi_0\,:\, X_0\,\longrightarrow\, Y_0$ is a surjective submersion, and
			\item[(ii)] the diagram
			\[
			\xy \xymatrix{ X_1\ar[r]^{(s,t)\,\,\,} \ar[d]_{\phi_1}&
				X_0\times X_0\ar[d]^{\phi_0 \times \phi_0}\\
				Y_1\ar[r]^{(s,t)\,\,\,}& Y_0\times Y_0}
			\endxy
			\]
			is Cartesian, or in other words, $X_1\,\cong\, Y_1\times_{Y_0\times Y_0} (X_0\times X_0)$.
		\end{itemize}
		Two Lie groupoids $\mathbb X$ and $\mathbb Y$ are {\it Morita
			equivalent}, if there exists a third Lie groupoid $\mathbb W$ and
		Morita morphisms
		$${\mathbb X}\,\stackrel{\phi}{\longleftarrow}\, {\mathbb W}
		\,\stackrel{\psi}{\longrightarrow}\, {\mathbb
			Y}\, .$$
	\end{definition}
	
	\begin{theorem}[Morita equivalence]\label{thm-m}
		Let ${\mathbb X}\,=\,[X_1\rightrightarrows X_0]$ and ${\mathbb Y}\,=\,[Y_1\rightrightarrows Y_0]$ be Lie groupoids. Let $\mathscr X$
		and $\mathscr Y$ be the associated differentiable stacks, i.e.,
		$\mathscr X$ is the classifying stack ${\mathscr B}{\mathbb X}$ of\, $\mathbb X$-torsors and
		$\mathscr Y$ is the classifying stack ${\mathscr B}{\mathbb Y}$ of\,
		$\mathbb Y$-torsors. Then the following three statements are equivalent:
		\begin{itemize}
			\item[(i)] The differentiable stacks $\mathscr X$ and $\mathscr Y$ are
			isomorphic.
			
			\item[(ii)] The Lie groupoids $\mathbb X$ and $\mathbb Y$ are Morita equivalent.
			
			\item[(iii)] there exists a smooth space $Q$ together with two smooth maps $f\,:\, Q\,\longrightarrow\, X_0$ and 
			$g\,:\, Q\,\longrightarrow\, Y_0$ and (commuting actions) of $X_1$ and $Y_1$ in such a way that $Q$ is at the same 
			time a left $\XX$-torsor over $Y_0$ via $g$ and a right $\mathbb Y$-torsor over $X_0$ via $f$, in other
			words, $Q$ is a $\XX$-$\mathbb Y$-bitorsor.
		\end{itemize}
	\end{theorem}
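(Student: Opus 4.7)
The strategy is to establish the two equivalences (i) $\Leftrightarrow$ (iii) and (ii) $\Leftrightarrow$ (iii) separately; together they yield the three-way equivalence. The key tool is the \emph{contracted product}: for a right $\XX$-torsor $P$ and a left $\XX$-space $R$, the space $P\times^{\XX}R$ denotes the quotient of $P\times_{X_0}R$ by the diagonal $\XX$-action.

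For (iii) $\Rightarrow$ (i), from an $\XX$-$\YY$-bitorsor $Q$ define $F_Q\,:\,\BB\XX\,\longrightarrow\,\BB\YY$ by sending a right $\XX$-torsor $P\,\longrightarrow\, S$ to $P\times^{\XX}Q$, which inherits a right $\YY$-action from the second factor; the opposite bitorsor $Q^{\mathrm{op}}$ provides a quasi-inverse, with the two compositions isomorphic to the identity via the standard cancellation identities for bitorsors. For (i) $\Rightarrow$ (iii), given an isomorphism $F\,:\,\mathscr{X}\,\longrightarrow\,\mathscr{Y}$, form the $2$-fibered product $Q\,:=\, X_0\times_{F\circ x,\,\mathscr{Y},\,y} Y_0$ of the atlases $x\,:\,X_0\,\longrightarrow\,\mathscr{X}$ and $y\,:\,Y_0\,\longrightarrow\,\mathscr{Y}$. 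The projection $f\,:\,Q\,\longrightarrow\, X_0$ realizes $Q$ as a right $\YY$-torsor (as the pullback of the atlas $y$ along the morphism $F\circ x$), and symmetrically, using $F^{-1}$, the projection $g\,:\,Q\,\longrightarrow\, Y_0$ realizes $Q$ as a left $\XX$-torsor. The two actions commute because they originate from independent legs of the $2$-fiber product, so $Q$ is an $\XX$-$\YY$-bitorsor.

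For (ii) $\Rightarrow$ (iii), each Morita morphism $\phi\,:\,\mathbb{W}\,\longrightarrow\,\XX$ produces an $\XX$-$\mathbb{W}$-bitorsor $P_\phi\,:=\, X_1\times_{s,X_0,\phi_0} W_0$, on which $\XX$ acts by left multiplication on the first factor and $\mathbb{W}$ acts via $\phi_1$ combined with right multiplication; the Morita conditions on $\phi$ are precisely what force the respective torsor axioms. Applied to both legs of $\XX\xleftarrow{\phi}\mathbb{W}\xrightarrow{\psi}\YY$ and composed via contracted product over $\mathbb{W}$, this yields the required $\XX$-$\YY$-bitorsor. Conversely, for (iii) $\Rightarrow$ (ii), given a bitorsor $Q$ with anchors $f\,:\,Q\,\longrightarrow\, X_0$ and $g\,:\,Q\,\longrightarrow\, Y_0$, build an intermediate Lie groupoid $\mathbb{W}\,=\,[W_1\,\rightrightarrows\, W_0]$ with $W_0\,:=\,Q$ and $W_1\,:=\, X_1\times_{s,X_0,f} Q\times_{g,Y_0,t} Y_1$; the source and target maps send $(\gamma, q, \xi)$ to $q$ and to $\gamma\cdot q\cdot \xi^{-1}$ respectively, with composition and units inherited from $\XX$ and $\YY$. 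The outer projections give functors $\phi\,:\,\mathbb{W}\,\longrightarrow\,\XX$ and $\psi\,:\,\mathbb{W}\,\longrightarrow\,\YY$, and each is Morita: the surjective submersiveness of $\phi_0\,=\,f$ and $\psi_0\,=\,g$ comes from the structure-map part of the torsor axioms, and the Cartesian condition of Definition \ref{dme} follows from the unique-transitivity property of the two torsor actions on $Q$.

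The main technical obstacle will be the careful bookkeeping of left versus right actions, anchor versus structure maps across the many fiber products, together with the verification that each contracted product actually descends to a smooth torsor with a submersive structure map. The subtlest individual step is showing that $Q\,\mapsto\, F_Q$ in (iii) $\Rightarrow$ (i) is inverse, up to natural isomorphism, to the atlas-pullback construction in (i) $\Rightarrow$ (iii); this reduces to the standard identifications $P\times^{\XX}X_1\,\cong\, P$ and $X_0\times_{\mathscr{X}} X_0\,\cong\, X_1$, which follow formally from the torsor and presentation axioms. Once the foundational facts that contracted products of bitorsors are bitorsors and that composition is associative up to canonical isomorphism are in place, the remaining verifications, while numerous, are of a routine formal nature, and an analogous argument already appears in the proof of \cite[Proposition 2.3]{BX}.
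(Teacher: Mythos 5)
Your outline is mathematically correct, but it is worth being clear about what it is being compared against: the paper does not actually prove this theorem. Its entire proof is a citation of \cite[Theorem 2.2]{BX} for the ${\mathcal C}^{\infty}$ case, plus the assertion that the argument carries over verbatim to the complex analytic and algebraic categories $\mathfrak S$. What you have written is, in substance, a reconstruction of the Behrend--Xu argument itself: the contracted product $P\times^{\XX}Q$ for (iii) $\Rightarrow$ (i), the $2$-fiber product $X_0\times_{\mathscr Y}Y_0$ of atlases for (i) $\Rightarrow$ (iii), the bitorsor $X_1\times_{s,X_0,\phi_0}W_0$ attached to a Morita morphism for (ii) $\Rightarrow$ (iii), and the linking-type groupoid $[X_1\times_{X_0}Q\times_{Y_0}Y_1\rightrightarrows Q]$ for (iii) $\Rightarrow$ (ii) are all the standard ingredients, and your observation that the Cartesian condition of Definition \ref{dme} is exactly the unique-transitivity axiom for the torsor actions is the correct key point in both directions between (ii) and (iii). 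So relative to the paper you are doing strictly more work, and relative to the cited source you are following the same route. Two items deserve more attention than your sketch gives them if this were to stand as a complete proof. First, the existence of the contracted product as an object of $\mathfrak S$ is not free: one must argue that the diagonal $\XX$-action on $P\times_{X_0}Q$ admits a quotient which is again a smooth space with submersive structure map; in the ${\mathcal C}^{\infty}$ and analytic settings this follows from local triviality of torsors, but in the algebraic category of $\mathfrak S$ quotients by free actions need not exist as schemes, and this is precisely the point at which the paper's claim that the proof ``works verbatim'' in all three categories would need justification. Second, your intermediate groupoid in (iii) $\Rightarrow$ (ii) needs a one-line check that the stated source, target and composition are compatible with the moment-map constraints (your $\xi$ versus $\xi^{-1}$ bookkeeping); this is routine but is exactly where sign/side conventions tend to break.
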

	
	\begin{proof}
		For the differentiable category of ${\mathcal C}^{\infty}$-manifolds, this is \cite[Theorem 2.2]{BX}. It is immediate to see that the proof works verbatim for any of the other categories $\mathfrak S$ of smooth spaces, 
		meaning for complex analytic as well as algebraic stacks.
	\end{proof}
	
	Therefore different presentations of the same differentiable stack are given by Morita equivalent Lie groupoids.
	Conversely, Morita equivalent Lie groupoids present isomorphic differentiable stacks. The case of \'etale groupoids presenting Deligne-Mumford stacks is  
	of particular importance as these can be used naturally also to describe orbifolds and foliations \cite{C, CM, L}.
	
	\section{Principal bundles over differentiable stacks and Lie groupoids}\label{Groupoides2}
	
	In this section, we will define the notion of a principal $G$-bundle over a differentiable stack and over a
	Lie groupoid. 
	
	\subsection{Principal bundles over differentiable stacks}
	
	Let us start with the definition of principal bundles over differentiable stacks.
	
	\begin{definition}[Principal bundle over differentiable stack]
		Let $G$ be a Lie group and $\mathscr X$ a differentiable stack. A {\it principal $G$-bundle} or
		{\it $G$-torsor} $\E_G$ over $\mathscr X$ is given by a differentiable stack $\mathscr{E}_G$, a morphism of
		differentiable stacks $\pi\,:\,\mathscr{E}_G\,\longrightarrow\, \mathscr{X}$, and a $2$-Cartesian diagram
		$$
		\xymatrix{\E_G \times G\rto^{\sigma}\dto^{p_1}&\E_G\dto^{\pi}\ar@{=>}[dl] \\
			\E_G\rto^{\pi}&\mathscr X.}
		$$
		such that for any submersion $$f\, :\, U\, \longrightarrow\, \mathscr X$$ from a smooth space $U$, the
		pullback by $f$ in the above diagram defines a principal $G$-bundle on $U$. {\em Morphisms} $\rho\,:\, \E_G
		\,\longrightarrow \, \F_G$ between two principal $G$-bundles $\E_G$ and $\F_G$ are defined in the obvious way.
	\end{definition}
	
	Principal $G$-bundles over a differentiable stack $\mathscr X$ can also be defined directly by using a presentation or atlas. 
	In other words, a principal $G$-bundle $\E_G$ over a differentiable stack $\mathscr X$ is given
	by a principal $G$-bundle $E_G\,\longrightarrow\, X_0$ for an atlas $X_0\,
	\longrightarrow\, {\mathscr X}$ together with an isomorphism of the pullbacks
	$p_1^*E_G\,\stackrel{\sim}\longrightarrow\, p_2^*E_G$ on the fiber product $X_0\times_{\mathscr X} X_0$
	satisfying the cocycle condition on $X_0\times_{\mathscr X} X_0\times_{\mathscr X} X_0$.
	It turns out that for any submersion $f\,:\, U\,\longrightarrow\, {\mathscr X}$, this
	datum defines a principal $G$-bundle $E_G\,\longrightarrow\, U$ over $U$, because
	$X_0\times_{\mathscr X}U\,\longrightarrow\, U$ has local sections. Therefore we get a
	differentiable stack $\E_G$ and the $G$-multiplication map glues and comes with a natural
	morphism of stacks ${\E_G}\times G\,\longrightarrow\, {\E_G}$ (see \cite{BMW, H, N}). We can reformulate all this more explicit also as follows: For each smooth atlas $u\,:\, U\,\longrightarrow\, {\mathscr X}$ we are given a principal
	$G$-bundle $\E_{G, u}$ over $U$ and for each $2$-commutative diagram of the form 
	\begin{equation}\label{diagat}
	\xy \xymatrix{U\ar[dr]_{u}\ar[rr]^{\varphi}& & V\ar[dl]^v\\
		&{\mathscr X }&}
	\endxy
	\end{equation}
	with a $2$-isomorphism $\alpha\,:\, u\,\Rightarrow \,v \circ \varphi$, where $u,\, v$ are smooth atlases, we are given an isomorphism
	$$\theta_{\varphi, \alpha}\,\,\,:\,\,\E_{G, u}\,\stackrel{\cong}{\longrightarrow}\, \varphi^* \E_{G, v}$$
	satisfying the cocycle condition which says that for each $2$-commutative diagram of the form
	\[
	\xy \xymatrix{U\ar[dr]_u\ar[r]^{\varphi}&V\ar[r]^{\psi} \ar[d]^v & W\ar[dl]^w\\
		&{\mathscr X}&}
	\endxy
	\]
	together with $2$-isomorphisms $\alpha\,:\, u\,\Rightarrow\, v \circ \varphi$ and $\beta\,:\, v\,
	\Rightarrow\, w \circ \psi$ we have a commutative diagram
	\[
	\xy \xymatrix{\E_{G,u} \rrto^{\theta_{\psi\circ\varphi, \beta\circ\alpha}}_{\cong}\dto_{\theta_{\varphi, \alpha}}^{\cong}&&(\psi\circ\varphi)^*\E_{G, w}\ar@{=}[d] \\
		\varphi^*\E_{G, v}\rrto^{\cong}_{\varphi^*\theta_{\psi, \beta}}& & (\varphi^*\circ \psi^*)\E_{G, w}}
	\endxy
	\]
	If $\F_G$ is another principal $G$-bundle over $\mathscr X$, then a morphism $f\,:\, \E_G\,\longrightarrow\,
	\F_G$ is given by a morphism $f_u\,:\, \E_{G, u}\, \longrightarrow\, \F_{G, u}$ for each smooth atlas
	$u\,:\, U\,\longrightarrow \,{\mathscr X}$ such that for any $2$-commutative diagram of
	the form (\ref{diagat}) we have a commutative diagram
	\[
	\xy \xymatrix{\E_{G,u} \rrto^{f_u}\dto_{\theta^{\E_G}_{\varphi, \alpha}}^{\cong}&&\F_{G, u}\dto^{\theta^{\F_G}_{\varphi, \alpha}}_{\cong} \\
		\varphi^*\E_{G, v}\rrto_{\varphi^*f_v}& & \varphi^*\F_{G, v}}
	\endxy
	\]
	The category of principal $G$-bundles over a differentiable stack $\mathscr X$ forms in a natural way a groupoid fibration ${\mathscr B}un_G({\mathscr X})$ over $\mathfrak S$. In fact, we get the following characterization from the above (compare also \cite{BMW, H}) 
	
	\begin{proposition}\label{propp}
		Let $\mathscr X$ be a differentiable stack and $G$ a Lie group with classifying stack ${\mathscr B}G$. Giving a principal 
		$G$-bundle over $\mathscr X$ is equivalent to giving a morphism of stacks ${\mathscr X}\,\longrightarrow\,
		{\mathscr B}G$ and two principal $G$-bundles over $\mathscr X$ are isomorphic if and only
		if the corresponding morphisms of stacks ${\mathscr X}\,\longrightarrow\, {\mathscr B}G$ are $2$-isomorphic.
	\end{proposition}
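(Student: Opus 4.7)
The plan is to exhibit an equivalence of groupoids between principal $G$-bundles over $\mathscr X$ and morphisms of stacks $\mathscr X\to\mathscr B G$, by constructing functors in both directions and showing they are quasi-inverse. This is the stack-theoretic analogue of the classical fact that $G$-bundles on a space correspond to maps into $BG$, and the whole argument is essentially a Yoneda-style unwinding of the fact that $\mathscr B G$ represents the functor ``principal $G$-bundles''.

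Going from bundles to morphisms: given a principal $G$-bundle $\mathscr E_G\to\mathscr X$, I would define a morphism $F_{\mathscr E_G}\colon\mathscr X\longrightarrow\mathscr B G$ by assigning to an object $x\in\mathscr X(U)$ (equivalently a morphism $u\colon U\longrightarrow\mathscr X$) the principal $G$-bundle $\mathscr E_{G,u}\longrightarrow U$ obtained by the $2$-Cartesian pullback in the defining diagram of $\mathscr E_G$. Functoriality in $U$ follows from compatibility of these pullbacks with the isomorphisms $\theta_{\varphi,\alpha}$ (applied to $2$-commutative triangles as in \eqref{diagat}) together with their cocycle condition, which translates directly into the data needed to make $F_{\mathscr E_G}$ a morphism of fibered categories over $\mathfrak S$.

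Going from morphisms to bundles: the key observation is that $\mathscr B G$ comes with a tautological principal $G$-bundle, namely the identity functor $\mathrm{id}\colon\mathscr B G\to\mathscr B G$, where an atlas $u\colon U\to\mathscr B G$ is literally a principal $G$-bundle $P_u\to U$. Given a morphism of stacks $F\colon\mathscr X\longrightarrow\mathscr B G$, for each smooth atlas $u\colon U\longrightarrow\mathscr X$ I would set $\mathscr E_{G,u}:=F(u)$ (the principal $G$-bundle on $U$ obtained by evaluating $F$), and for each $2$-commutative triangle \eqref{diagat} I would take $\theta_{\varphi,\alpha}$ to be the isomorphism $F(u)\cong\varphi^*F(v)$ provided by the $2$-isomorphism $F(\alpha)\colon F(u)\Rightarrow F(v)\circ\varphi$. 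The cocycle condition for the $\theta_{\varphi,\alpha}$ is then automatic from the coherence of $F$ applied to compositions of such triangles, so this produces a principal $G$-bundle $\mathscr E_G$ over $\mathscr X$ in the explicit presentation-based sense discussed just before the proposition. Equivalently, one can form the $2$-fibered product $\mathscr E_G:=\mathscr X\times_{\mathscr B G,F}*$ using the atlas $*\longrightarrow\mathscr B G$ of Example \ref{ex1}; the defining universal property of the $2$-pullback produces the required $2$-Cartesian square $\mathscr E_G\times G\to\mathscr E_G\to\mathscr X$, and pullback along any $U\longrightarrow\mathscr X$ recovers $F(U)$ as a principal $G$-bundle on $U$.

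The two constructions are quasi-inverse: starting with $\mathscr E_G$ and applying both constructions recovers, atlas-by-atlas, the original bundles $\mathscr E_{G,u}$ together with their transition data $\theta_{\varphi,\alpha}$; starting with $F$ and applying both constructions reproduces $F$ up to canonical natural isomorphism because its value on each atlas $u$ is exactly $F(u)$. For the second assertion, a $2$-isomorphism $\eta\colon F\Rightarrow F'$ of morphisms $\mathscr X\to\mathscr B G$ is by definition a modification assigning to each object $u\in\mathscr X(U)$ an isomorphism $\eta_u\colon F(u)\stackrel{\sim}{\longrightarrow} F'(u)$ of principal $G$-bundles on $U$, compatible with pullbacks; this is exactly the datum of a morphism $f\colon\mathscr E_G\to\mathscr E'_G$ in the explicit presentation-by-presentation description given just before the proposition, and $\eta$ is invertible iff each $\eta_u$ is, iff $f$ is an isomorphism of principal $G$-bundles over $\mathscr X$. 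The main obstacle is purely bookkeeping: verifying that the cocycle condition on the $\theta_{\varphi,\alpha}$ corresponds precisely to the coherence of a pseudofunctor into $\mathscr B G$, and that all constructions respect the $2$-isomorphisms $\alpha$, $\beta$ appearing in Section \ref{Groupoides2}; once this $2$-categorical dictionary is set up, the equivalence is formal.
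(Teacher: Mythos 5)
Your proposal is correct and matches the paper's intent: the paper gives no explicit proof of this proposition, presenting it as an immediate consequence of the atlas-based description of principal $G$-bundles (the data $\mathscr{E}_{G,u}$, $\theta_{\varphi,\alpha}$ and the cocycle condition) with a pointer to the references, and your argument is exactly the standard $2$-Yoneda unwinding of that description, pulling back the tautological bundle $*\to\mathscr{B}G$ in one direction and evaluating on atlases in the other. The only cosmetic caveat is that the "tautological bundle" is the atlas $*\to\mathscr{B}G$ rather than the identity functor, which you in fact correct yourself in the same paragraph.
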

	
	We consider the groupoid fibration ${\mathscr B}un_G({\mathscr X})$ over $\mathfrak S$ whose objects over a 
	smooth space $U$ are principal $G$-bundles ${\mathscr E}_G$ over ${\mathscr X}\times U$ and whose morphisms are given by pullback diagrams of principal $G$-bundles. Given two differentiable stacks $\mathscr X$ and $\mathscr Y$, we also have the groupoid fibration ${\mathscr H}om(\X,\, \Yy)$ over $\mathfrak S$, 
	whose groupoid of sections over a smooth space $U$ is the groupoid of $1$-morphisms or functors ${\mathscr H}om(\X,\,
	\Yy)(U)\,=\,\Hom_U(\X\times U,\, \Yy\times U)$. Let us remark that if in addition $\X$ is proper and $\Y$ of finite 
	presentation, then ${\mathscr H}om(\X,\, \Yy)$ is again a differentiable stack and in particular if $\Yy$ is a Deligne--Mumford 
	stack, then the Hom stack ${\mathscr H}om(\X, \,\Yy)$ is also a Deligne--Mumford stack (compare \cite{Ao}, \cite{Ol}). We can 
	now make the following straightforward observation using Proposition \ref{propp} (compare with \cite{BMW}).
	
	\begin{proposition}
		There is an equivalence of groupoid fibrations over $\mathfrak S$
		$${\mathscr B}un_G({\mathscr X})\,\cong\, {\mathscr H}om(\X, \,{\mathscr B}G)\, .$$
	\end{proposition}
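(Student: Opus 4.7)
The plan is to apply Proposition \ref{propp} fiberwise over each smooth space $U$ and then verify naturality of the resulting equivalence in $U$. By definition, the fiber of $\mathscr{B}un_G(\X)$ over $U$ is the groupoid of principal $G$-bundles on the product stack $\X\times U$, while the fiber of $\mathscr{H}om(\X,\mathscr{B}G)$ over $U$ is $\Hom_U(\X\times U,\,\mathscr{B}G\times U)$. The latter is canonically equivalent to the ordinary Hom groupoid $\Hom(\X\times U,\,\mathscr{B}G)$, since a morphism over $U$ is forced to have its second component equal to the projection to $U$, and is therefore determined by its first component $\X\times U\longrightarrow \mathscr{B}G$.

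Having matched the fibers, I would apply Proposition \ref{propp} to the differentiable stack $\X\times U$ to obtain an equivalence of groupoids
$$\Phi_U\,:\,\mathscr{B}un_G(\X)(U)\,\stackrel{\sim}{\longrightarrow}\,\mathscr{H}om(\X,\,\mathscr{B}G)(U),$$
sending a principal $G$-bundle $\E_G$ on $\X\times U$ to its classifying morphism $\X\times U\longrightarrow\mathscr{B}G$, and sending a morphism $\X\times U\longrightarrow\mathscr{B}G$ back to the principal $G$-bundle obtained by pulling back the universal $G$-bundle associated with the presentation $*\longrightarrow\mathscr{B}G$ of Example \ref{ex1}.

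To upgrade the family $\{\Phi_U\}_U$ into a $1$-morphism of groupoid fibrations, I would check naturality in $U$: for every smooth map $h\,:\,V\longrightarrow U$, the pullback functors on both sides are both induced by base change along $\id_\X\times h\,:\,\X\times V\longrightarrow\X\times U$. Since the equivalence of Proposition \ref{propp} is built from pullback of principal $G$-bundles along morphisms of differentiable stacks, and pullback of $G$-bundles commutes with further pullback up to a canonical $2$-isomorphism, the square formed by $\Phi_U$, $\Phi_V$, and the two pullback functors commutes up to a natural isomorphism. The assembled morphism is then fiberwise an equivalence and hence an equivalence of groupoid fibrations over $\mathfrak S$.

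The main obstacle is bookkeeping at the $2$-categorical level: one must verify that the natural $2$-isomorphisms witnessing compatibility with pullback satisfy the cocycle condition for composable smooth maps $W\longrightarrow V\longrightarrow U$, so that the $\Phi_U$ genuinely assemble into a pseudo-natural equivalence rather than just a family of fiberwise equivalences. This coherence is automatic from the universal property of $\mathscr{B}G$ and from the uniqueness up to canonical isomorphism of pullbacks in stacks, but deserves explicit attention, in the spirit of the descent data for principal bundles over stacks described by diagram \eqref{diagat}.
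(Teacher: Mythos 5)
Your argument is correct and is essentially the one the paper intends: the paper offers no explicit proof, describing the statement as a straightforward observation from Proposition \ref{propp}, and your fiberwise application of that proposition to $\X\times U$ together with the naturality check in $U$ is exactly the right way to make that observation precise. The coherence issue you flag at the end is handled, as you say, by the canonical uniqueness of pullbacks, so no genuine gap remains.
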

	
	\subsection{Principal bundles over Lie groupoids}
	Let us now recall the general notion of a principal $G$-bundle over a Lie groupoid (see \cite{LTX, TXL}).
	
	Given a $\mathbb X$-space $\pi\,:\, P\,\longrightarrow\, X_0$ for a Lie groupoid
	${\mathbb X}\,=\,[X_1\rightrightarrows X_0]$, we have for any $\gamma\,\in\,X_1$ a smooth isomorphism in $\mathfrak S$
	$$l_{\gamma}\,:\, \pi^{-1}(u)\,\longrightarrow\, \pi^{-1}(v)\, ,\ ~ x\,
	\longmapsto\, \gamma\cdot x\, ,$$
	where $u\,=\,s(\gamma)$ and $v\,=\,t(\gamma)$. Associated to this $\mathbb X$-space
	is a transformation groupoid ${\mathbb P}\,=\,[Q\,=\,X_1\times_{s, X_0, \pi} P\rightrightarrows P]$, where
	the source and target maps are given by $s(\gamma,\, x)\,=\,x$ and
	$t(\gamma, x)\,=\,\gamma\cdot x$. The multiplication map is given by
	$$(\gamma,\, y)\cdot (\delta,\, x)\,=\,(\gamma\cdot \delta,\, x)\, ,$$
	where $y\,=\,\delta\cdot x$. The first projection defines a strict homomorphism of Lie groupoids from
	${\mathbb P}\,=\,[Q\rightrightarrows P]$ to ${\mathbb X}\,=\,[X_1\rightrightarrows X_0]$.
	
	\begin{definition}[Principal bundle over Lie groupoid]\label{PrinGGr}
		Let $G$ be a Lie group and ${\mathbb X}\,=\,[X_1\rightrightarrows X_0]$ a Lie groupoid. A {\em
			principal $G$-bundle} or {\em $G$-torsor} over $\mathbb X$, denoted by
		${\mathbb E}_G\,:=\,[s^*E_G\rightrightarrows E_G]$,
		is given by a principal (right) $G$-bundle $\pi\,:\, E_G\,\longrightarrow\, X_0$, which
		is also a $\mathbb X$-space such that for all $x\,\in\, E_G$ and all $\gamma\,\in\,
		X_1$ with $s(\gamma)\,=\,\pi(x)$, we have $$(\gamma\cdot x)\cdot g\,=\,
		\gamma\cdot (x\cdot g)~ \ \mathrm{for\,\,all} ~\ g\,\in\, G\, .$$
		Let $s^*E_G\,=\,X_1\times_{s, X_0, \pi} E_G$ be the pullback along the source map $s$. Then
		${\mathbb E}_G\,=\,[s^*E_G\rightrightarrows E_G]$ is in a natural way a Lie groupoid, the 
		transformation groupoid with respect to the $X_1$-action.
	\end{definition}
	
	\begin{example}\label{Ex:G-Hequi}
		Let $G$ and $H$ be Lie groups, and let ${\mathbb X}\,=\,[H\times X_0\rightrightarrows
		X_0]$ be the transformation groupoid over a smooth space $X_0$. Then a principal
		$G$-bundle over $\mathbb X$ is an $H$-equivariant principal $G$-bundle over $X_0$.
		In particular, if ${\mathbb X}\,=\,[X_0\rightrightarrows X_0]$ with both structure maps $s, t$
		being the identity map $id_{X_0}$, then a principal $G$-bundle over $\mathbb X$ is just a
		principal $G$-bundle over $X_0$. 
	\end{example}
	
	\begin{example}
		Let $G,\, H$ be a pair of Lie groups, and ${\mathbb X}\,=\,[H\rightrightarrows *]$ the single object  Lie groupoid associated to the Lie group $H$. Then a left-action of $H$ on $G$ satisfying 
		$h\cdot(gg')\,=\, (h\cdot g)g'$ for all $h\, \in\, H$ and $g,\, g'\,\in\, G$ defines a principal $G$-bundle over ${\mathbb X}$, and vice versa. 
	\end{example}

	Similarly, we can define vector bundles of rank $n$ over a Lie groupoid $\XX$ and over a
	differentiable stack $\mathscr X$. They can be identified with the principal
	$\GL_n$-bundles over $\XX$ and $\mathscr X$ respectively.
	
	The following theorem generalizes \cite[Proposition 4.1]{BX} for arbitrary principal bundles and 
	the proof given below is a variation of the argument for $S^1$-bundles. 
	
	\begin{theorem}\label{thm1}
		Let $G$ be a Lie group, $\mathscr X$ be a differentiable stack and ${\mathbb X}=[X_1\rightrightarrows X_0]$ be
		a Lie groupoid presenting $\mathscr X$. Then there is a canonical equivalence of categories:
		$${\mathscr B}un_G({\mathscr X})\,\cong\, {\mathbb B}un_G({\mathbb X})\, ,$$
		where ${\mathscr B}un_G({\mathscr X})$ is the category of principal $G$-bundles over $\mathscr X$
		and ${\mathbb B}un_G({\mathbb X})$ is the category of principal $G$-bundles over $\mathbb X$.
	\end{theorem}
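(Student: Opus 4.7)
The plan is to construct explicit functors
$F\colon {\mathscr B}un_G({\mathscr X}) \longrightarrow {\mathbb B}un_G({\mathbb X})$
and
$F'\colon {\mathbb B}un_G({\mathbb X}) \longrightarrow {\mathscr B}un_G({\mathscr X})$
and then verify they are mutually quasi-inverse. The bridge between the two sides is the canonical identification $X_1 = X_0\times_{\mathscr X} X_0$ coming from the fact that $\XX$ presents $\mathscr X$ via the atlas $x\colon X_0 \longrightarrow \mathscr X$, under which the source and target maps $s,t\colon X_1 \rightrightarrows X_0$ are precisely the two projections $p_1, p_2$.

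For $F$, given a principal $G$-bundle $\E_G$ over $\mathscr X$, pull back along the atlas to obtain $E_G := x^*\E_G$, a principal $G$-bundle on $X_0$. By the atlas reformulation of a principal bundle over a stack recalled earlier in this section, $\E_G$ is equivalent to the datum of an isomorphism $\theta\colon p_1^* E_G \stackrel{\sim}{\longrightarrow} p_2^* E_G$ of principal $G$-bundles over $X_0 \times_{\mathscr X} X_0$, satisfying a cocycle condition on $X_0 \times_{\mathscr X} X_0 \times_{\mathscr X} X_0$. Interpreting $\theta$ as $\theta\colon s^* E_G \longrightarrow t^* E_G$ and taking adjoints yields a smooth map $\sigma\colon X_1 \times_{s, X_0, \pi} E_G \longrightarrow E_G$ covering $t$; the cocycle condition translates precisely into associativity of $\sigma$ as an $X_1$-action, while $G$-equivariance of $\theta$ ensures that the $X_1$- and $G$-actions commute. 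This yields $\mathbb E_G = [s^* E_G \rightrightarrows E_G]$ exactly as in Definition \ref{PrinGGr}, and the construction is manifestly functorial on morphisms since a $G$-equivariant map covering $\id_{X_0}$ that intertwines the $\theta$'s is exactly one that is $X_1$-equivariant.

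For $F'$, conversely, given $\mathbb E_G \in {\mathbb B}un_G(\XX)$, the commuting $X_1$- and $G$-actions on $E_G\longrightarrow X_0$ define a $G$-equivariant isomorphism $s^* E_G \longrightarrow t^* E_G$ on $X_1 = X_0 \times_{\mathscr X} X_0$, and associativity of the action is the cocycle condition on the triple fiber product. This is descent data for a principal $G$-bundle along the atlas $x$, and by the stack axioms (Definition \ref{ds}) combined with \'etale-local triviality of principal $G$-bundles, it glues to a canonical principal $G$-bundle $\E_G$ over $\mathscr X$ whose pullback by $x$ recovers $(E_G,\theta)$. Routine verifications then show $F' \circ F \cong \id$ and $F \circ F' \cong \id$.

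The main technical obstacle is combinatorial bookkeeping: matching the cocycle over $X_0 \times_{\mathscr X} X_0 \times_{\mathscr X} X_0$ to associativity over $X_1 \times_{s, X_0, t} X_1$ requires the canonical identification
$$X_0 \times_{\mathscr X} X_0 \times_{\mathscr X} X_0 \,\cong\, (X_0 \times_{\mathscr X} X_0) \times_{X_0} (X_0 \times_{\mathscr X} X_0),$$
under which the middle copy of $X_0$ is reached by $p_2$ from the first factor and by $p_1$ from the second, together with careful tracking of source/target conventions in the multiplication map of $\XX$. Once this alignment is set up, everything else reduces to $2$-descent for the stack $\mathscr X$ along $x$ and unwinding Definition \ref{PrinGGr}.
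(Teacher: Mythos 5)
Your proposal is correct in substance, and the forward functor is essentially the paper's: pull back along the atlas $x\colon X_0\to\mathscr X$, identify $X_1$ with $X_0\times_{\mathscr X}X_0$, and read off the $X_1$-action on $E_G$ from the resulting Cartesian square; your phrasing via the descent isomorphism $\theta\colon s^*E_G\to t^*E_G$ and the paper's phrasing via the pullback diagram over $s$ are two descriptions of the same datum. Where you genuinely diverge is in the quasi-inverse. You build $\E_G$ by gluing: the $X_1$-action is descent data along the atlas, and you invoke the stack axioms plus local triviality to glue it to a bundle over $\mathscr X$. The paper instead takes the transformation groupoid ${\mathbb E}_G=[s^*E_G\rightrightarrows E_G]$ and defines the total space directly as the classifying stack $\mathscr{B}{\mathbb E}_G$ of ${\mathbb E}_G$-torsors; the groupoid morphism ${\mathbb E}_G\to{\mathbb X}$ then induces $\mathscr{B}{\mathbb E}_G\to\mathscr X$, whose representability and $G$-bundle property are checked by pulling back to $X_0$. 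The paper's route hands you a concrete model of the total space as a stack and makes representability transparent; yours makes essential surjectivity and full faithfulness more formal, since both sides are matched against the same descent data. One caveat for your version: the gluing axioms of Definition \ref{ds} are stated for \'etale coverings, whereas the atlas $x$ is only a surjective submersion, so the descent step is not a direct application of those axioms. You need the standard intermediate argument (used by the paper in its discussion preceding Proposition \ref{propp}) that for any $U\to\mathscr X$ the pullback $X_0\times_{\mathscr X}U\to U$ admits local sections, which reduces descent along $x$ to ordinary gluing of principal bundles on $U$. With that supplied, your argument goes through.
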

	
	\begin{proof}
		Let us first assume we have given a principal $G$-bundle $\pi\,:\,\E_G\,\longrightarrow\,\X$ over the
		differentiable stack $\X$. Let $E_G$ be the pullback of $X_0\, \longrightarrow\, \X$ via $\pi$,
		in other words, we
		have a $2$-Cartesian diagram
		\[
		\xy \xymatrix{E_G\ar[r] \ar[d]& X_0\ar[d]\\
			\E_G\ar[r]^\pi & \X}
		\endxy
		\]
		This means that $E_G\,\longrightarrow\, X_0$ is a principal $G$-bundle over $X_0$ and $E_G\,
		\longrightarrow\,\E_G$ is a representable surjective submersion.
		Let ${\mathbb E}_G\,=\,[s^*E_G\rightrightarrows E_G]$ be the associated Lie groupoid, which
		comes with an induced morphism of Lie groupoids
		$${\mathbb E}_G\,=\,[s^*E_G\rightrightarrows E_G]\,\longrightarrow\, {\mathbb X}\,=\,
		[X_1 \rightrightarrows X_0]$$ and gives rise to the Cartesian diagram
		\[
		\xy \xymatrix{s^*E_G\ar[r] \ar[d]& X_1\ar[d]\\
			E_G\ar[r]& X_0}
		\endxy
		\]
		In other words, we get a pullback diagram of smooth spaces in which the vertical maps are source maps.
		This implies that $s^*E_G\,\longrightarrow\, E_G$ is a principal $G$-bundle, and the vertical maps are
		morphisms of $G$-bundle.
		Therefore, $X_1$ acts on $E_G$ and $E_G$ becomes an $\mathbb X$-space which actually turns
		${\mathbb E}_G\,=\,[s^*E_G\rightrightarrows E_G]$ into a principal $G$-bundle over the groupoid 
		${\mathbb X}\,=\, [X_1 \rightrightarrows X_0]$. 
		
		In fact, we get a functor.
		$\Psi\,:\, {\mathscr B}un_G({\mathscr X})\,\longrightarrow\,{\mathbb B}un_G({\mathbb X})$,
		which associates the principal $G$-bundle ${\mathbb E}_G\,=\,[s^*E_G\rightrightarrows E_G]$ over
		the Lie groupoid ${\mathbb X}$ to the given principal $G$-bundle $\E_G\,\longrightarrow\,\X$.
		
		Now let us assume conversely, that we have given a principal $G$-bundle ${\mathbb E}_G\,=\,[s^*E_G\rightrightarrows
		E_G]$ over the Lie groupoid ${\mathbb X}\,=\,[X_1 \rightrightarrows X_0]$. This means that we have
		a principal $G$-bundle $\pi\,:\, E_G\,\longrightarrow \,X_0$, which is also a ${\mathbb X}$-space
		satisfying the conditions given in Definition \ref{PrinGGr}. Recall that $$s^*E_G\,=\,
		X_1\times_{s, X_0, \pi} E_G$$ is the pullback along the source map $s$, and ${\mathbb E}_G\,=\,
		[s^*E_G\rightrightarrows E_G]$ is in a natural way a Lie groupoid. It follows also that $s^*E_G$
		becomes in a natural way a $G$-bundle over $X_1$, and we have a morphism of Lie groupoids
		$${\mathbb E}_G\,=\,[s^*E_G\rightrightarrows E_G]\,\longrightarrow\, {\mathbb X}
		$$
		which respects the $G$-bundle structures of $\tau\,:\, s^*E_G\,\longrightarrow\,X_1$ and $\pi\,:\,
		E_G\,\longrightarrow\, X_0$. Let 
		${\mathscr E}_G:={\mathscr B}{\mathbb E}_G$ be the associated differentiable stack of ${\mathbb E}_G$-torsors. The morphism of Lie
		groupoids ${\mathbb E}_G\,\longrightarrow\, {\mathbb X}$ induces a morphism between the associated
		differentiable stacks ${\mathscr E}_G\,\longrightarrow\, {\mathscr X}$, which is representable, as its pullback
		to $X_0$ is the map $\pi\,:\, E_G\,\longrightarrow\, X_0$ between smooth spaces:
		\[
		\xy \xymatrix{E_G\ar[r] \ar[d]& {\mathscr E}_G\ar[d]\\
			X_0\ar[r]& \X}
		\endxy
		\]
		Furthermore, it follows that the pullback of the morphism of stacks ${\mathscr E}_G\,\longrightarrow\,
		{\mathscr X}$ along any morphism $U\,\longrightarrow\,\mathscr X$ from a smooth space $U$ is a principal 
		$G$-bundle. Therefore ${\mathscr E}_G$ is a principal $G$-bundle over the differentiable stack $\X$.
		
		Finally, we get a functor in the opposite direction $\Phi\,:\, {\mathbb B}un_G({\mathbb X})
		\,\longrightarrow\,{\mathscr B}un_G({\mathscr X})$, which associates the principal $G$-bundle $\E_G\,
		\longrightarrow\,\X$ to the given principal $G$-bundle ${\mathbb E}_G\,=\,[s^*E_G\rightrightarrows
		E_G]$ over the Lie groupoid ${\mathbb X}$. It is easy to see that the functors $\Psi$ and $\Phi$
		are mutually inverse and give the desired equivalence of categories. 
	\end{proof}
	
	The following definition allows for yet another alternative description of principal $G$-bundles over 
	Lie groupoids.
	
	\begin{definition}\label{defpbo}
		Let ${\mathbb X}\,=\,[X_1
		\rightrightarrows X_0]$ be a Lie groupoid and $G$ a Lie group. A {\it principal
			$G$-groupoid over} $\mathbb X$ is a Lie groupoid ${\mathbb P}\,=\,[Q\rightrightarrows P]$
		together with a groupoid morphism
		$$
		\xymatrix{G\rto & Q\rto^{\tau} \ar@<-2pt>[d]_{\widetilde{t}} \ar@<+2pt>[d]^{\widetilde{s}}&
			X_1\ar@<-2pt>[d]_t \ar@<+2pt>[d]^s\\
			G\rto & P\rto^{\pi} & X_0}
		$$
		such that both $G\,\longrightarrow\, Q\,\stackrel{\tau}\longrightarrow\, X_1$ and
		$G\,\longrightarrow\, P\,\stackrel{\pi}\longrightarrow\, X_0$ are principal $G$-bundles,
		and, furthermore, the following conditions hold:
		\begin{enumerate}
			\item the source and target maps $\widetilde s$ and $\widetilde t$ on $Q$ are $G$-equivariant,
			
			\item the identity section $\widetilde{e}\, :\, P\, \longrightarrow\, Q$ is $G$-equivariant,
			
			\item the inversion map $\widetilde{i}\, :\, Q\, \longrightarrow\, Q$ satisfies the identity
			$\widetilde{i}(zg) \,=\, \widetilde{i}(z)g^{-1}$ for all $z\, \in\, Q$ and $g\, \in\, G$, and
			
			\item the multiplication map $Q\times_{s,X_0,t} Q\, \longrightarrow\, Q$ is $G$-equivariant
			for the diagonal action of $G$ on $Q\times_{s,X_0,t} Q$.
		\end{enumerate}
	\end{definition}
	
	There is an obvious notion of a morphism between principal $G$-groupoids, and we can speak of the category
	${\mathbb B}un_{G, {\mathbb X}}$ of principal $G$-groupoids over $\mathbb X$, which yields the following equivalent characterization of the category of principal $G$-bundles over a given Lie groupoid $\mathbb X$.
	
	\begin{proposition}
		Let $G$ be a Lie group and ${\mathbb X}\,=\,[X_1\rightrightarrows X_0]$ a Lie
		groupoid. Then there is a canonical equivalence of categories
		$${\mathbb B}un_G({\mathbb X})\,\cong\, {\mathbb B}un_{G, {\mathbb X}}\, ,$$
		where ${\mathbb B}un_G({\mathbb X})$ is the category of principal $G$-bundles
		over $\mathbb X$, and ${\mathbb B}un_{G, {\mathbb X}}$ is the category of principal 
		$G$-groupoids over $\mathbb X$.
	\end{proposition}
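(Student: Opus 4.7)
The approach is to describe explicit functors in both directions and to verify they are mutually quasi-inverse. In one direction, define $\Psi\colon {\mathbb B}un_G({\mathbb X})\,\longrightarrow\, {\mathbb B}un_{G,{\mathbb X}}$ by sending a principal $G$-bundle $\pi\colon E_G\longrightarrow X_0$ with compatible $X_1$-action to the transformation groupoid ${\mathbb E}_G\,=\,[s^*E_G\rightrightarrows E_G]$ from Definition \ref{PrinGGr}, equipped with its canonical morphism to $[X_1\rightrightarrows X_0]$. Here $s^*E_G\,=\,X_1\times_{s,X_0,\pi}E_G$ acquires a principal $G$-bundle structure over $X_1$ by pulling back $E_G\longrightarrow X_0$ along $s$, and the compatibility axiom $(\gamma\cdot x)\cdot g\,=\,\gamma\cdot(x\cdot g)$ translates directly into the $G$-equivariance conditions (1)--(4) of Definition \ref{defpbo}: equivariance of $\widetilde s$ and $\widetilde t$ expresses the fact that the source and target of the transformation groupoid intertwine the $G$-action; equivariance of the identity section encodes $e(\pi(x))\cdot x\,=\,x$ for any $x\cdot g$; the formula for $\widetilde i$ reflects that taking $\gamma^{-1}$ commutes with the right $G$-action; and equivariance of multiplication $(\gamma,y)\cdot(\delta,x)\,=\,(\gamma\delta,x)$ is automatic from $s^*E_G\times_{E_G}s^*E_G\,=\,X_1\times_{s,X_0,t}X_1\times_{s,X_0,\pi}E_G$.

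In the opposite direction, define $\Phi\colon {\mathbb B}un_{G,{\mathbb X}}\,\longrightarrow\, {\mathbb B}un_G({\mathbb X})$ by sending $[Q\rightrightarrows P]\longrightarrow[X_1\rightrightarrows X_0]$ to the principal $G$-bundle $\pi\colon P\longrightarrow X_0$ endowed with the $X_1$-action constructed as follows. The key observation is that the canonical map
$$
(\tau,\widetilde s)\,:\, Q\,\longrightarrow\, X_1\times_{s, X_0, \pi} P\,=\,s^*P
$$
is a morphism of principal $G$-bundles over $X_1$ (for $\tau$ on the left and first projection on the right), since $\tau$ is $G$-invariant, $\widetilde s$ is $G$-equivariant, and the compatibility of $\widetilde s$ with $s$ follows from commutativity of the diagram in Definition \ref{defpbo}. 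Any $G$-equivariant map between principal $G$-bundles over the same base that commutes with the projections is an isomorphism, hence $(\tau,\widetilde s)\colon Q\,\stackrel{\cong}{\longrightarrow}\,s^*P$. Using this identification, define $\gamma\cdot x\,:=\,\widetilde t(q)$ for the unique $q\in Q$ with $\tau(q)\,=\,\gamma$ and $\widetilde s(q)\,=\,x$. The groupoid axioms for $[Q\rightrightarrows P]$ (identity, inverse, associativity of composition) then translate into axioms (i)--(iii) of Definition \ref{defXs}, while the $G$-equivariance conditions in Definition \ref{defpbo} are exactly what is needed to guarantee $(\gamma\cdot x)\cdot g\,=\,\gamma\cdot(x\cdot g)$, so $P$ becomes a principal $G$-bundle over $\mathbb X$ in the sense of Definition \ref{PrinGGr}.

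Finally, check that $\Psi$ and $\Phi$ are mutually quasi-inverse. Starting from $E_G\in{\mathbb B}un_G({\mathbb X})$, the composition $\Phi\circ\Psi$ returns $E_G$ together with the original $X_1$-action, since $\widetilde t(\gamma,x)\,=\,\gamma\cdot x$ by construction of the transformation groupoid. Starting from $[Q\rightrightarrows P]$ in ${\mathbb B}un_{G,{\mathbb X}}$, the composition $\Psi\circ\Phi$ yields $[s^*P\rightrightarrows P]$, which is identified with $[Q\rightrightarrows P]$ via the canonical isomorphism $(\tau,\widetilde s)$; one checks directly that this identification intertwines $\widetilde t$ with the action $\widetilde t(\gamma,x)\,=\,\gamma\cdot x$ and respects all structure maps and the $G$-action. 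These identifications are clearly functorial in morphisms of principal $G$-bundles and principal $G$-groupoids.

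The main technical step is the isomorphism $(\tau,\widetilde s)\colon Q\,\stackrel{\cong}{\longrightarrow}\,s^*P$, since this is what recovers the transformation groupoid structure from the abstract data of a principal $G$-groupoid; without this identification the inverse functor $\Phi$ would be ill-defined. The rest of the verification amounts to a systematic translation between the equivariance conditions in Definition \ref{defpbo} and the compatibility conditions in Definitions \ref{defXs} and \ref{PrinGGr}, which is routine once this core identification is in place.
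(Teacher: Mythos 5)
Your proposal is correct, and it is essentially the standard argument; the only caveat is that the paper itself does not spell it out at all --- its ``proof'' is a one-line citation of \cite[Lemma 2.5]{LTX}, so what you have written is in effect the content of that external lemma rather than a variant of anything in the text. Your identification of the crux is right: the canonical $G$-equivariant map $(\tau,\widetilde{s})\colon Q\longrightarrow s^*P$ over $\mathrm{id}_{X_1}$ is an isomorphism because any morphism of principal $G$-bundles covering the identity of the base is one, and this is exactly what lets you transport the abstract groupoid structure on $[Q\rightrightarrows P]$ into an $X_1$-action on $P$ via $\gamma\cdot x:=\widetilde{t}(q)$. The verifications you list (that $\pi(\gamma\cdot x)=t(\gamma)$ follows from $\pi\circ\widetilde{t}=t\circ\tau$, that the unit axiom follows from $G$-equivariance of $\widetilde{e}$, that associativity follows from compatibility of $\tau$ with multiplication, and that $(\gamma\cdot x)\cdot g=\gamma\cdot(x\cdot g)$ follows from $G$-equivariance of $\widetilde{t}$ applied to $qg$) all go through, as does the check that $\Psi$ and $\Phi$ are mutually quasi-inverse via the same isomorphism $(\tau,\widetilde{s})$.

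One small point worth flagging: for the transformation groupoid $[s^*E_G\rightrightarrows E_G]$ with the pullback $G$-action $(\gamma,x)\cdot g=(\gamma,x\cdot g)$, the inversion $\widetilde{i}(\gamma,x)=(\gamma^{-1},\gamma\cdot x)$ satisfies $\widetilde{i}(zg)=\widetilde{i}(z)g$, not $\widetilde{i}(zg)=\widetilde{i}(z)g^{-1}$ as condition (3) of Definition \ref{defpbo} literally states. Your gloss that ``taking $\gamma^{-1}$ commutes with the right $G$-action'' is the correct statement; the anti-equivariance in the paper's condition (3) appears to be a misprint (or reflects a different choice of $G$-action on $Q$), and in any case your argument never needs it --- the action you construct and its compatibility with $G$ rely only on conditions (1), (2) and (4).
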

	
	\begin{proof}
		The proof for the differentiable category given in \cite[Lemma 2.5]{LTX} and works equally
		well in the complex analytic and algebraic context. 
	\end{proof}
	
	\begin{remark} 
		It can be shown that principal $G$-bundles over a Lie groupoid ${\mathbb
			X}\,=\,[X_1\rightrightarrows X_0]$ are also equivalent to generalized homomorphisms from 
		$\mathbb X$ to the Lie groupoid ${\mathbb B}G\,=\,[G\rightrightarrows *]$, which 
		is Morita equivalent to the {\em gauge groupoid} ${\mathbb P}_{\text{Gauge}}:=[P\times P/G\rightrightarrows X_0]$. In 
		\cite[Prop. 2.13, Prop. 2.14 and Thm 2.15]{LTX} this is discussed in the
		differentiable setting, but this makes sense again in any of the categories $\mathfrak{S}$ of smooth spaces.
	\end{remark}
	
	\section{Connections on Lie groupoids and differentiable stacks}\label{Connections}
	
	In this section, we define and study the notion of connections on Lie groupoids and differentiable stacks. 
	
	\subsection{Connections on Lie groupoids and differentiable stacks} Let us start with the general definition of a connection
	on a Lie groupoid.
	
	Let $(\XX\,=\,[X_1\rightrightarrows X_0]\, , s\, , t\, , m\, , e\, , i)$ be a Lie
	groupoid. Let
	\begin{equation}\label{cK}
	{\mathcal K}\,:=\, \text{kernel}(ds)\, \subset\, TX_1
	\end{equation}
	be the vertical tangent bundle for $s$. Since the source map $s\, :\, X_1\, \longrightarrow\, X_0$ is
	a submersion, ${\mathcal K}$ is a subbundle of $TX_1$. Fix a distribution on $X_1$ given by a subbundle
	${\mathcal H}\, \subset\, TX_1$ such that the natural homomorphism
	\begin{equation}\label{f2}
	{\mathcal K}\oplus {\mathcal H}\, \longrightarrow\, TX_1
	\end{equation}
	is an isomorphism,  so ${\mathcal H}$ is a complement of ${\mathcal K}$. Let
	$$
	d_{\mathcal H}s\, :=\,(ds)\vert_{\mathcal H}\, :\, {\mathcal H}\,\longrightarrow\,
	s^*TX_0
	$$
	be the restriction of $ds$ to ${\mathcal H}$. Note that $d_{\mathcal H}s$
	is an isomorphism because the homomorphism in \eqref{f2} is an isomorphism.
	
	Let
	$
	dt\, :\, TX_1\, \longrightarrow\, t^*TX_0
	$
	be the differential of the map $t$. Consider the homomorphism
	\begin{equation}\label{eq:theta}
	\theta\, :=\,dt\circ (d_{\mathcal H}s)^{-1}\, :\,
	s^*TX_0\, \longrightarrow\, t^*TX_0\, .
	\end{equation}
	For any $y\, \in\, X_1$, let
	\begin{equation}\label{eq:theta2}
	\theta_y\, :=\, \theta\vert_{(s^*TX_0)_y}\, :\, T_{s(y)}X_0\,\longrightarrow\,
	T_{t(y)}X_0
	\end{equation}
	be the restriction of $\theta$ to the fiber $(s^*TX_0)_y\,=\, T_{s(y)}X_0$.
	
	We can then define generally
	
	\begin{definition}[Connection on a Lie groupoid]\label{d-cg}
		A {\em connection} on a Lie groupoid $\XX\,=\,[X_1\rightrightarrows X_0]$ is a distribution
		${\mathcal H}\, \subset\, TX_1$ given by a subbundle complementing $\mathcal K$ such that
		\begin{itemize}
			\item[(i)] for every $x\, \in\, X_0$, the image of the differential
			$$de(x)\,:\, T_x X_0\, \longrightarrow\, T_{e(x)} X_1$$ coincides with
			the subspace ${\mathcal H}_{e(x)}\, \subset\,T_{e(x)} X_1$, and
			\item[(ii)] for every $y\, ,z\, \in\, X_1$ with $t(y)\,=\, s(z)$,
			the homomorphism $$\theta_{m(z,y)}\, :\, T_{s(y)}X_0\,\longrightarrow\,
			T_{t(z)}X_0$$ coincides with the composition $\theta_z\circ\theta_y$ (see
			\eqref{eq:theta2}).
		\end{itemize}
		A connection on $\XX$ is said to be {\em flat} (or {\em integrable}) if the distribution ${\mathcal H}\,\subset\, TX_1$ defining
		the connection is integrable.
	\end{definition}
	Henceforth we will denote  $m(y, z)=y\circ z.$ Similarly for the pairs $(y, v), (z, w)\in T X_1$ such that $s(y)=t(z), dt_y(v)=ds_z(w),$ the composition $dm\bigl((y, v), (z, w)\bigr)$ in the induced tangent Lie groupoid  $T\XX=[TX_1\rightrightarrows TX_0]$ will be denoted as $(y\circ z, v\circ w).$
	
	We will now discuss some functoriality properties for our notion of connection on Lie groupoids. 
	
	\begin{proposition}\label{prop-ic}
		Let ${\mathbb X}\,=\,[X_1\rightrightarrows X_0]$ and ${\mathbb
			Y}\,=\,[Y_1\rightrightarrows Y_0]$ be Lie groupoids, and let
		$$
		\phi\,=\, (\phi_1,\, \phi_0)\,:\, {\mathbb X}\,\longrightarrow\, {\mathbb Y}
		$$
		be a Morita morphism. Let
		$$
		{\mathcal H}\, \subset\, TY_1
		$$
		be a distribution that defines a connection on the groupoid ${\mathbb Y}$. Then
		$\mathcal H$ induces a connection on the groupoid ${\mathbb X}$.
	\end{proposition}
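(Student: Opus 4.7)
The plan is to construct the induced distribution $\mathcal{H}'\,\subset\, TX_1$ by lifting $\mathcal{H}$ through $\phi$, and then to verify the two axioms of Definition~\ref{d-cg}. First I would unpack the Cartesian condition on the $(s,t)$-square together with the fact that $\phi_0$ is a surjective submersion: these imply that $\phi_1\,\colon\, X_1\,\longrightarrow\, Y_1$ is itself a submersion and give a pullback description of tangent spaces, identifying $T_x X_1$ with triples $(u,\, a,\, b)$ where $u\,\in\, T_{\phi_1(x)} Y_1$, $a\,\in\, T_{s_X(x)} X_0$, $b\,\in\, T_{t_X(x)} X_0$ satisfy $ds_Y(u)\,=\, d\phi_0(a)$ and $dt_Y(u)\,=\, d\phi_0(b)$.

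Using this description, I would construct $\mathcal{H}'_x$ by specifying horizontal lifts of base vectors: given $a\,\in\, T_{s_X(x)} X_0$, set $u\,:=\, (d_\mathcal{H} s_Y)^{-1}(d\phi_0(a))\,\in\, \mathcal{H}_{\phi_1(x)}$ and choose $b\,\in\, T_{t_X(x)} X_0$ with $d\phi_0(b)\,=\, dt_Y(u)$. The triple $(u,\, a,\, b)$ is then a horizontal lift of $a$ in $T_x X_1$, and the span of these lifts over all $a$ gives $\mathcal{H}'_x$ of rank $\dim X_0$. A direct check shows $\mathcal{H}'\cap \mathcal{K}_X\,=\, 0$, where $\mathcal{K}_X\,:=\,\ker(ds_X)$: any vector in the intersection has $a\,=\, 0$, which forces $u\,=\,0$ since $\mathcal{H}\cap \ker(ds_Y)\,=\,0$, and then $b\,=\,0$ once the lift is pinned down as described below.

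For the axioms: axiom (i) of Definition~\ref{d-cg} follows from $\phi_1\circ e_X\,=\, e_Y\circ \phi_0$ combined with axiom (i) for $\mathcal{H}$: one checks that $de_X(T_x X_0) \subseteq \mathcal{H}'_{e_X(x)}$ and concludes equality by a rank count. Axiom (ii), the cocycle $\theta_X(m(z,y))\,=\, \theta_X(z)\circ\theta_X(y)$, is transported from $\mathbb{Y}$ to $\mathbb{X}$: since $\phi_1$ intertwines the multiplications and the induced $\theta_X$ satisfies $d\phi_0\circ \theta_X(x)\,=\, \theta_Y(\phi_1(x))\circ d\phi_0$ by construction, the cocycle already verified for $\theta_Y$ descends to the corresponding cocycle for $\theta_X$.

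The main obstacle is that the lift $b$ in the construction is not canonical when $\phi_0$ is not étale, since then $d\phi_0$ has nontrivial kernel and $b$ is determined only modulo $\ker(d\phi_0)\,\subset\, T_{t_X(x)} X_0$. The resolution is to pin down the lift along the identity section $e_X(X_0)$, where $b\,=\, a$ is forced by the formula $de_X(a)\,=\,(de_Y(d\phi_0(a)),\, a,\, a)$ and by axiom (i) for $\mathcal{H}$, and then to propagate this choice along all of $X_1$ using groupoid multiplication. Axiom (ii) ensures that this propagation is consistent and that the resulting $\mathcal{H}'$ is a smooth subbundle, completing the verification that $\mathcal{H}'$ defines a connection on $\mathbb{X}$.
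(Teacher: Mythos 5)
Your construction is genuinely different from the paper's: the paper simply sets $\widetilde{\mathcal H} := (d\phi_1)^{-1}(\phi_1^*\mathcal H)$ and declares it a connection, whereas you use the Cartesian description $T_xX_1\cong\{(u,a,b)\}$ coming from $X_1\cong Y_1\times_{Y_0\times Y_0}(X_0\times X_0)$ to build a rank-$\dim X_0$ distribution out of horizontal lifts. In doing so you have put your finger on exactly the point the paper's one-line argument skates over, namely that the lift $b$ of $dt_Y(u)$ is only determined modulo $\ker(d\phi_0)$. But your proposed resolution does not close this gap. ``Pin down $b$ along the identity section and propagate by groupoid multiplication'' is not a construction: a general arrow $x\in X_1$ does not factor through identity arrows in any way that selects $b$, and the relation $d\phi_0\circ\theta_X = \theta_Y\circ d\phi_0$ that you get by construction determines $\theta_X$, and hence the cocycle identity for $\theta_X$, only modulo $\ker(d\phi_0)$ --- so axiom (ii) for $\mathcal H$ cannot be ``descended'' on the nose.

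In fact no choice of the $b$'s can work in general. Take $\mathbb Y=[\ast\rightrightarrows\ast]$ with its unique connection $\mathcal H=0$, and let $\mathbb X=[S^2\times S^2\rightrightarrows S^2]$ be the pair groupoid; the collapse map is a Morita morphism. A connection on the pair groupoid in the sense of Definition~\ref{d-cg} is precisely a smooth family of isomorphisms $\theta_{(x,y)}\colon T_xS^2\to T_yS^2$ with $\theta_{(x,x)}=\mathrm{id}$ and $\theta_{(y,z)}\circ\theta_{(x,y)}=\theta_{(x,z)}$, and then $y\longmapsto\theta_{(x_0,y)}$ would trivialize $TS^2$, which is impossible. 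So the statement fails in this generality, and neither your argument nor the paper's (whose preimage distribution has rank $2\dim X_0-\dim Y_0$, which differs from $\dim X_0$ unless $\dim X_0=\dim Y_0$) can be repaired without an extra hypothesis, e.g.\ that $\phi_0$ is \'etale, in which case $b$ is uniquely determined and your lifting argument goes through. Your diagnosis of where the difficulty sits is correct and more honest than the published proof; what is missing is a hypothesis, not a cleverer propagation.
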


	\begin{proof}
		Recall from Definition \ref{dme}(i) that the map $\phi_0\,:\, X_0\,\longrightarrow\, Y_0$ is a
		surjective submersion. Hence from Definition \ref{dme}(ii) it follows that
		$$\phi_1\,:\, X_1\,\longrightarrow\, Y_1$$
		is also a surjective submersion. Let
		$$
		d\phi_1\,:\, TX_1\,\longrightarrow\, TY_1
		$$
		be the differential of the map $\phi_1$.  Let ${\mathcal H}\, \subset\, TY_1$ be the distribution inducing a connection on the groupoid ${\mathbb Y}$. 
		Now define
		$$
		\widetilde{\mathcal H}\, :=\, (d\phi_1)^{-1}(\phi^*_1 {\mathcal H})\, \subset\, TX_1\, .
		$$
		Since $\phi_1$ is a surjective submersion, it follows that $\widetilde{\mathcal H}\, \subset\, TX_1$
		is a distribution and therefore defines a connection on the groupoid ${\mathbb X}$.
	\end{proof}
	
	Similarly, as in \cite{B1}, we can study functorial behaviour with respect to horizontal morphisms of Lie groupoids.
	\begin{definition} 
		Let ${\mathbb X}\,=\,[X_1\rightrightarrows X_0]$ and ${\mathbb Y}\,=\,[Y_1\rightrightarrows Y_0]$ be Lie groupoids and 
		$\phi\,=\, (\phi_1,\, \phi_0)\,:\, {\mathbb X}\,\longrightarrow\, {\mathbb Y}$ be a morphism. Suppose that ${\mathcal H}\subset TX_1$ defines a connection on $\XX$ and ${\mathcal L}\subset TY_1$ a connection on $\YY$. The morphism $\phi$ is called {\em horizontal} if the differential 
		$d\phi_1\,:\, TX_1\,\longrightarrow\, TY_1$ maps ${\mathcal H}$ into ${\mathcal L}$.
	\end{definition}

	\begin{proposition}
	Let $\XX$ and $\YY$ be Lie groupoids together with an \'etale morphism $\phi\,=\, (\phi_1,\, \phi_0)\,:\, {\mathbb X}\,\longrightarrow\, {\mathbb Y}$. Then any connection ${\mathcal L}$ on $\YY$ induces a unique connection ${\mathcal H}$ on $\XX$  such that $\phi$ is a horizontal morphism. If the connection ${\mathcal L}$ is integrable, then also the induced connection ${\mathcal H}$ is integrable. 
	\end{proposition}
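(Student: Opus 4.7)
The strategy is to pull back the distribution $\mathcal{L}$ along the étale map $\phi_1$. Since $\phi_1\,:\,X_1\,\longrightarrow\,Y_1$ is étale, the differential $d\phi_1$ is a fibrewise isomorphism $T_yX_1\,\cong\,T_{\phi_1(y)}Y_1$, and similarly for $d\phi_0$. I define
$$\mathcal{H}_y\,:=\,(d\phi_1\vert_y)^{-1}\bigl(\mathcal{L}_{\phi_1(y)}\bigr)\,\subset\,T_yX_1,$$
which is a smooth subbundle of the same rank as $\mathcal{L}$. The horizontality of $\phi$ is then immediate, and uniqueness follows at once: if $\mathcal{H}'$ is any complement of $\mathcal{K}$ with $d\phi_1(\mathcal{H}')\subset\mathcal{L}$, then since $\dim\mathcal{H}'_y\,=\,\dim X_0\,=\,\dim Y_0\,=\,\dim\mathcal{L}_{\phi_1(y)}$ and $d\phi_1\vert_y$ is an isomorphism, $d\phi_1(\mathcal{H}')\,=\,\mathcal{L}$ along $\phi_1$, forcing $\mathcal{H}'\,=\,\mathcal{H}$.

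Next I verify that $\mathcal{H}$ is actually a connection on $\XX$. Differentiating $s_Y\circ\phi_1\,=\,\phi_0\circ s_X$ gives $ds_Y\circ d\phi_1\,=\,d\phi_0\circ ds_X$, which shows that $d\phi_1$ maps $\mathcal{K}\,=\,\ker(ds_X)$ into $\ker(ds_Y)$; since $d\phi_1$ and $d\phi_0$ are fibrewise isomorphisms and the two kernels have equal rank, the map is actually a fibrewise isomorphism between them. Combined with $\mathcal{L}$ being a complement of $\ker(ds_Y)$, this implies that $\mathcal{K}\oplus\mathcal{H}\,\longrightarrow\,TX_1$ is an isomorphism. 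The identity-section condition follows by differentiating $\phi_1\circ e_X\,=\,e_Y\circ\phi_0$: the image of $de_X(x)$ is mapped by $d\phi_1$ onto the image of $de_Y(\phi_0(x))$, which equals $\mathcal{L}_{e_Y(\phi_0(x))}$ by hypothesis, so by the defining property of $\mathcal{H}$ the image of $de_X(x)$ lies in (and by dimension equals) $\mathcal{H}_{e_X(x)}$.

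The main step is condition (ii) of Definition~\ref{d-cg}, the multiplicativity of $\theta$. I first establish the intertwining identity
$$d\phi_0\circ\theta^{\XX}_y\,=\,\theta^{\YY}_{\phi_1(y)}\circ d\phi_0\qquad\text{for all }y\,\in\,X_1.$$
Given $v\,\in\,T_{s_X(y)}X_0$, let $u\,:=\,(d_{\mathcal{H}}s_X)^{-1}(v)\,\in\,\mathcal{H}_y$. Then $d\phi_1(u)\,\in\,\mathcal{L}_{\phi_1(y)}$ by definition of $\mathcal{H}$, and $ds_Y(d\phi_1(u))\,=\,d\phi_0(v)$, so $d\phi_1(u)\,=\,(d_{\mathcal{L}}s_Y)^{-1}(d\phi_0(v))$. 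Applying $dt_Y$ and using $dt_Y\circ d\phi_1\,=\,d\phi_0\circ dt_X$ yields the identity. Now for composable $y,\,z\,\in\,X_1$, the multiplicativity of $\theta^{\YY}$ applied to $\phi_1(y),\,\phi_1(z)$ together with $\phi_1(m_X(z,y))\,=\,m_Y(\phi_1(z),\,\phi_1(y))$ and the intertwining identity gives
$$d\phi_0\circ\theta^{\XX}_{m_X(z,y)}\,=\,d\phi_0\circ\theta^{\XX}_z\circ\theta^{\XX}_y,$$
and since $d\phi_0$ is a fibrewise isomorphism, cancellation yields $\theta^{\XX}_{m_X(z,y)}\,=\,\theta^{\XX}_z\circ\theta^{\XX}_y$.

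Finally, integrability is preserved because $\phi_1$ is a local diffeomorphism and $\mathcal{H}$ is, locally on $X_1$, the pullback of $\mathcal{L}$ under this diffeomorphism; explicitly, around any point, local vector fields spanning $\mathcal{H}$ are $\phi_1$-related to local vector fields spanning $\mathcal{L}$, and the Lie bracket is preserved under $\phi_1$-relation, so closure of $\mathcal{L}$ under the bracket forces closure of $\mathcal{H}$. The most delicate step is the verification of the multiplicativity of $\theta^{\XX}$, as it requires threading the intertwining relation through the groupoid multiplication; all other items reduce to functoriality of the differential applied to the structure maps of $\XX$ and $\YY$.
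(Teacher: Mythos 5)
Your construction $\mathcal{H}_y=(d\phi_1\vert_y)^{-1}(\mathcal{L}_{\phi_1(y)})$ is exactly the paper's fiber product $\mathcal{H}:=\mathcal{L}\times_{TY_1}TX_1$, so the approach is the same; the paper states this in one line and omits all verification, whereas you correctly check complementarity to $\ker(ds)$, the identity-section axiom, the multiplicativity of $\theta$ via the intertwining identity $d\phi_0\circ\theta^{\XX}_y=\theta^{\YY}_{\phi_1(y)}\circ d\phi_0$, uniqueness, and integrability. All of these verifications are sound.
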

	\begin{proof}
	 Given a connection ${\mathcal L}$ on the groupoid $\YY$, the unique connection ${\mathcal H}$ on the groupoid $\XX$ is given by the fiber product ${\mathcal H}:={\mathcal L}\times_{TY_1}TX_1.$	
	\end{proof}
	
	In particular, a connection (respectively, flat connection) on an \'etale groupoid induces then a connection (respectively, flat connection) on the associated Deligne-Mumford stack ${\mathscr B}{\mathbb X}$, the classifying stack of $\mathbb X$-torsors. 
	
\begin{example}\label{Example:connectionOnMM}
		Let $\mathbb{X}=[X\rightrightarrows X]$ be the Lie groupoid associated to a smooth space $X$. Then $\text{kernel}(ds)\,=\,\{0\}$ and
		the map $x\,\longmapsto \,\mathcal{H}_xX\,:=\,T_xM$, $x\, \in\, X$, defines an integrable connection $\mathcal{H}_{\mathbb{X}}\subset 
		TX$ on $\mathbb{X}$.
	\end{example}	
\begin{example}
More generally, for an \'etale Lie groupoid $\mathbb{X}\,=\,[X_1\rra X_0],$ since  the differential $ds\,\colon\, TX_1\,\longrightarrow\, s^*TX_0$ is an 
		isomorphism,   the distribution $\mc{H}_{\mb{X}}=\, TX_1$ is an integrable connection on the Lie 
		groupoid $\mb{X}$. 
\end{example}

\begin{example}\label{Example:Vectorbundlegroupoid}
		Let $\pi\,\colon\, E\,\longrightarrow\, X$ be a finite rank vector bundle over a smooth space $X.$ We get a Lie groupoid $\mb{X}=[E\rra X]$ with  the source and target maps  both being $\pi.$ Then any two composable morphisms belong to the same fibre,  and the composition can be defined as addition of vectors in that fibre. Any connection 
		on the vector bundle $\pi\,\colon\, E\,\longrightarrow\, X$ smoothly splits $E$ into the horizontal component and  $\text{kernel}(ds).$ This gives a connection on the associated  Lie groupoid $\mb{X}=[E\rra X]$. Integrability of one connection implies integrability of the other.	
			\end{example}

\begin{example}\label{Example:Gaugegrouoidconnection}
Let $G$ be a Lie group. Given a principal $G$-bundle $\pi\,\colon\,P\,\longrightarrow\, X$ over a smooth space $X$, 
one defines the {\em Atiyah} or {\em gauge groupoid} $\mb{P}_{\rm Gauge}:=\,[\frac{P\times P}{G}\rra X\,]$ by 
building the quotient of the groupoid $[P\times P\rra P]$ with respect to the diagonal action of $G$ on $P\times P.$ 
A connection $\omega$ on the principal bundle $\pi\,\colon\, P\,\longrightarrow\, X$ gives a $G$-invariant horizontal 
distribution $\mc{H}\subset TP$ complementing the $\text{kernel}.$ We define a connection $\mc{H}_{\mb{P}_{\rm 
Gauge}}$ on the Lie groupoid $\mb{P}_{\rm Gauge}$ by $${\mc{H}}_{[p, q]}\,\,:=\,\,\frac{{\mc{H}}_{p}\oplus T_qP}{T_{p, 
q}\,{(p, q)\cdot G}}\,\,\subset\, \,T_{[p, q]}\frac{P\times P}{G}\, ,$$ where $(p,\, q)\cdot G\,\subset\, P\times P$ is 
the orbit of the element $(p,\, q)\,\in\, P\times P.$ Integrability of the connection $\omega$ implies integrability 
of $\mc{H}_{\mb{P}_{\rm Gauge}}.$
\end{example}
	
	\begin{remark}\label{re:otherconnections}
		Related constructions of connections and flat connections on groupoids and on stacks in the algebro-geometric, differentiable and holomorphic context were also studied by Behrend in \cite{B1}. Flat connections in the differentiable setting for Lie groupoids were also independently introduced as \'etalifications by Tang \cite{T}. These constructions give all rise to subgroupoids of the associated tangent groupoid $T\XX$ of a groupoid $X$, which in the differentiable category
		is equivalent to the horizontal paths forming a subgroupoid of the path groupoid of $\XX$. This is used
		by Laurent-Gengoux, Sti\'enon and Xu in \cite{LSX} to define connections in the general framework of non-abelian differentiable gerbes via Ehresmann connections on Lie groupoid extensions. They also appear as multiplicative distributions on Lie groupoids in recent work by Drummond, and Egea \cite{DE} and Trentinaglia \cite{Tr}.
		Another definition of a general Ehresmann connection for Lie groupoids was more recently also given and discussed by Arias Abad and Crainic
		\cite{AC}. 
	\end{remark}
	
	We will now point out some additional properties of our constructions of connections for Lie groupoids to highlight the relation with other constructions existing in the literature and here in particular with those in \cite{B1, AC}.

	Let ${\mathcal H}\subset TX_1$ be a connection on a Lie groupoid $\XX=[X_0\rightrightarrows X_1]$.  The connection defines a groupoid  $[s^*TX_0\rightrightarrows TX_0]$  as follows. Without loss of any information we denote an element $(\gamma, s(\gamma), v)$ of $s^*TX_0$ by $(\gamma, v)$, where $v\in T_{s(\gamma)}X_0$. The source, target and composition maps are then respectively given by 
	\begin{equation}\label{eq:grpds*tx}
	\begin{split}
	&s\,\colon\,(\gamma,\, v)\,\longmapsto\, (s(\gamma),\, v), \qquad \gamma\,\in\, X_1,\, v\,\in \,T_{s(\gamma)}X_0,\\
	&t\,\colon\,(\gamma,\, v)\,\longmapsto\, (t(\gamma),\, \theta_{\gamma}(v)),\\
	&(\gamma_2,\, v_2)\circ (\gamma_1,\, v_1)\,=\,(\gamma_2\circ \gamma_1,\, v_1).	\\
	\end{split}
	\end{equation}
	The inversion and unit maps are obvious. Note that the composition is well defined because of condition (ii) in Definition~\ref{d-cg}. Similarly $[t^*TX_0\rightrightarrows TX_0]$ is a groupoid with the corresponding structure maps given as:
	\begin{equation}\label{eq:grpdt*Tx}
	\begin{split}
	&s\,\colon\,(\gamma, \,u)\,\longmapsto\, (s(\gamma), \,\theta_{{\gamma}^{-1}}(u)), \qquad \gamma\,\in
	\, X_1,\, u\,\in\, T_{t(\gamma)}X_0,\\
	&t\,\colon\,(\gamma,\, u)\,\longmapsto\, (t(\gamma),\, u),\\
	&(\gamma_2,\, u_2)\circ (\gamma_1,\, u_1)\,=\,(\gamma_2\circ \gamma_1,\, u_2).	\\
	\end{split}
	\end{equation}
	In fact $$\theta\,\colon\, (\gamma, \,v)\,\longmapsto\, (\gamma, \,
	{d_{\mathcal H}s}_{{\gamma}}^{-1}(v))\,\longmapsto\, (\gamma, \,\underbrace{dt_{\gamma}\circ {d_{\mathcal H}s}_{{\gamma}}^{-1}}_{\theta_{\gamma}} (v))$$ in \eqref{eq:theta} defines an isomorphism of Lie groupoids
	\begin{equation}\label{isosourcetarhgetgrpd}
	\xymatrix{ s^*TX_0\rto^{\theta} \ar@<-2pt>[d] \ar@<+2pt>[d]&
		t^*TX_0\ar@<-2pt>[d] \ar@<+2pt>[d]\\
		TX_0\rto^{{\rm Id}} & TX_0}
	\end{equation}
	Moreover, the condition (ii) in Definition~\ref{d-cg} implies that
	\begin{equation}\label{thetads}
	{d_{\mathcal H}s}_{{\gamma_2\circ \gamma_1}}^{-1}(v)-{d_{\mathcal H}s}_{{\gamma_2}}^{-1}(\theta_{\gamma_1}(v))\circ 
	{d_{\mathcal H}s}_{{\gamma_1}}^{-1}(v)\,\in\, {\rm ker}(dt_{\gamma_2\circ \gamma_1})
	\end{equation} 
	for any pair of composable $\gamma_1,\, \gamma_2\,\in\, X_1$ and $v\,\in\, T_{s(\gamma_1)}X_0.$ We denote the
	corresponding element in ${\rm ker}(dt_{\gamma_2\circ \gamma_1})$ by
	$${\mathfrak K}(\gamma_2,\, \gamma_1,\, v)\,:=\,
	{d_{\mathcal H}s}_{{\gamma_2\circ \gamma_1}}^{-1}(v)-{d_{\mathcal H}s}_{{\gamma_2}}^{-1}(\theta_{\gamma_1}(v))\circ 
	{d_{\mathcal H}s}_{{\gamma_1}}^{-1}(v).
	$$
	
	The outcome of the above observation is the following lemma.
	
	\begin{lemma}
		$({d_{\mathcal H}s}^{-1},\, {\rm{Id}})\,\colon\, [s^*{TX_0}\rightrightarrows TX_0]\,\longrightarrow\,
		[TX_1\rightrightarrows TX_0]$ defines an essentially surjective, faithful functor if and only if
		${\mathfrak K}(\gamma_2, \,\gamma_1,\, v)$ vanishes for all pairs of composable arrows $\gamma_1,\,\gamma_2\,\in\, X_1$
		and $v\,\in \,T_{s(\gamma_1)}X_0.$ 
	\end{lemma}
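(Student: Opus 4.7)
The plan is to observe that essential surjectivity and faithfulness come for free from the shape of the map, so the entire content of the lemma boils down to checking composition preservation, which will be literally the vanishing of $\mathfrak{K}$. Since the object map is the identity $TX_0\to TX_0$, essential surjectivity is immediate. For faithfulness, the arrow assignment $(\gamma,v)\mapsto d_{\mathcal H}s^{-1}_\gamma(v)$ sends arrows with distinct base points $\gamma$ into disjoint tangent fibers $T_\gamma X_1$, and for fixed $\gamma$ restricts to the linear isomorphism $d_{\mathcal H}s^{-1}_\gamma\colon T_{s(\gamma)}X_0\longrightarrow \mathcal{H}_\gamma$; together these give injectivity on arrows.

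Next I would record that source, target, and unit preservation hold automatically under the standing assumption that $\mathcal{H}$ is a connection. Source preservation is the tautology $ds\circ d_{\mathcal H}s^{-1}_\gamma=\mathrm{id}$. Target preservation matches the identity $\theta_\gamma=dt_\gamma\circ d_{\mathcal H}s^{-1}_\gamma$ built into \eqref{eq:theta} with the target formula for $s^*TX_0$ recorded in \eqref{eq:grpds*tx}. Unit preservation reduces to $d_{\mathcal H}s^{-1}_{e(x)}(v)=de(x)(v)$: condition (i) of Definition \ref{d-cg} forces $de(x)(v)\in\mathcal{H}_{e(x)}$, and since $ds\circ de=\mathrm{id}$ this vector is the unique horizontal lift of $v$ at $e(x)$, hence agrees with $d_{\mathcal H}s^{-1}_{e(x)}(v)$.

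The only substantive step is composition preservation. Given a composable pair $(\gamma_2,v_2)\circ(\gamma_1,v_1)=(\gamma_2\circ\gamma_1,v_1)$ in $s^*TX_0$, necessarily $v_2=\theta_{\gamma_1}(v_1)$. Writing $V_i:=d_{\mathcal H}s^{-1}_{\gamma_i}(v_i)$, the chain $dt(V_1)=\theta_{\gamma_1}(v_1)=v_2=ds(V_2)$ shows that $V_2$ and $V_1$ are composable in the tangent groupoid $TX_1\rightrightarrows TX_0$, and both $V_2\circ V_1$ and the image $d_{\mathcal H}s^{-1}_{\gamma_2\circ\gamma_1}(v_1)$ of the composition lie in the same tangent space $T_{\gamma_2\circ\gamma_1}X_1$. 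By the very definition given just after \eqref{thetads}, their difference is precisely $\mathfrak{K}(\gamma_2,\gamma_1,v_1)$, so composition is preserved if and only if $\mathfrak{K}$ vanishes on every composable triple, which is the statement of the lemma.

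No step presents a genuine obstacle; the main bookkeeping point to handle with care is that the composition formula in $s^*TX_0$ retains $v_1$ while the source and target involve $v$ and $\theta_\gamma(v)$ respectively, and one must confirm that the comparison defining $\mathfrak{K}$ is an honest equality in $T_{\gamma_2\circ\gamma_1}X_1$, not merely an equality modulo $\ker(dt_{\gamma_2\circ\gamma_1})$ as the weaker relation \eqref{thetads} would suggest on its own.
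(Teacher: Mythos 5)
Your proposal is correct and follows essentially the same route as the paper: essential surjectivity and faithfulness are immediate from the form of the map, and the whole content is that preservation of composition is precisely the vanishing of $\mathfrak{K}(\gamma_2,\gamma_1,v)$, read off from the definition following \eqref{thetads}. Your additional checks of source, target, and unit preservation, and the remark that $\mathfrak{K}$ is by definition an honest element of $T_{\gamma_2\circ\gamma_1}X_1$ (lying in $\ker(dt_{\gamma_2\circ\gamma_1})$) rather than an equivalence class, are correct elaborations of details the paper leaves implicit.
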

	
	\begin{proof}
		That it is essentially surjective is evident. Now we have:
		\begin{equation*}
		{d_{\mathcal H}s}^{-1}\big((\gamma_2, v_2)\circ (\gamma_1, v_1)\big)
		={d_{\mathcal H}s}^{-1}\big((\gamma_2\circ \gamma_1, v_1)\big)
		=(\gamma_2\circ \gamma_1, {d_{\mathcal H}s}_{\gamma_2\circ \gamma_1}^{-1}(v_1)).
	         \end{equation*}
		
		On the other hand, we get:
		\begin{eqnarray*}
		{d_{\mathcal H}s}^{-1}\big((\gamma_2, v_2)\big)\circ {d_{\mathcal H}s}^{-1}\big((\gamma_1, v_1)\big) = \big(\gamma_2, {d_{\mathcal H}s}_{\gamma_2}^{-1}(v_2)\big)\circ \big(\gamma_1, {d_{\mathcal H}s}_{\gamma_1}^{-1}(v_1)\big)\\
		=  \big(\gamma_2\circ \gamma_1, {d_{\mathcal H}s}_{\gamma_2}^{-1}(v_2)\circ {d_{\mathcal H}s}_{\gamma_1}^{-1}(v_1)\big).
		\end{eqnarray*}
				
		Then functoriality is now immediate from the vanishing of the left hand side in \eqref{thetads}.
		Since ${d_{\mathcal H}s}^{-1}$ is injective, the functor is also faithful. 
	\end{proof}
	
	\begin{remark}
	From the above we see that ${\mathfrak K}(\gamma_2,\, \gamma_1,\, v)$ gives an obstruction for the groupoid $[{\mathcal H}
	\,\rightrightarrows \,TX_0]$  to be a subgroupoid of the tangent groupoid $T\XX\,=\,[TX_1\rightrightarrows TX_0]$. 
	Indeed vanishing of the element in \eqref{thetads} is equivalent to the necessary and sufficient condition for
	a splitting as in \eqref{f2} to yield a subgroupoid, mentioned in Lemma 2.13 of \cite{AC}. Moreover
	\eqref{isosourcetarhgetgrpd} implies that both diagrams 
	\begin{equation}\nonumber
	\xy \xymatrix{ {\mathcal H} \ar@<-.5ex>[r]\ar@<.5ex>[r]  \ar[d]_{}&
		TX_0\ar[d]^{}\\
		X_1\ar@<-.5ex>[r]\ar@<.5ex>[r]  & X_0}
	\endxy
	\end{equation}	
	are Cartesian and thus we arrive at the definition of a connection on a Lie groupoid as given in \cite[Def. 2.1]{B1}. In conclusion, we see that our definition of a connection on a Lie groupoid is more general than the one in 
	\cite{B1}, but more strict than the definition in \cite{AC}. In fact, a partition of unity argument shows that any Lie groupoid admits a connection in the sense 
	of \cite{AC}. If such a connection satisfies in addition the conditions (i) and (ii) of Definition~\ref{d-cg}, then we obtain a connection as defined in this article.
	\end{remark}

	\subsection{Making $TX_0$ a vector bundle over $\XX$}\label{se3.2}
	
	We will now state another useful geometric interpretation. Let $\mathcal H$ be a connection on a given Lie groupoid $\XX \,=\,[X_1
	\rightrightarrows X_0]$, and let
	$$
	\tau\, :\, TX_0\, \longrightarrow\, X_0
	$$
	be the natural projection. Consider the fiber product
	$$
	Y_1\, :=\, X_1\times_{s,X_0,\tau} TX_0\, .
	$$
	Let
	$s'\, :\, Y_1\, \longrightarrow\, TX_0$
	be the projection to the second factor. Define the morphism
	$$
	t'\, :\, Y_1\, \longrightarrow\, TX_0\, , \ ~ 
	(x,\, v)\,\longmapsto\, \theta_x(v)\,\in\, T_{t(x)}X_0\, ,
	$$
	where $\theta_x$ is constructed as in (\ref{eq:theta2}). Furthermore, let
	$$
	p\, :\, Y_1\, =\, X_1\times_{s,X_0,\tau} TX_0\, \longrightarrow\, X_1
	$$
	be the projection to the first factor. For
	any $z,\, y\, \in\, Y_1$ with $t'(y)\,=\, s'(z)$, define
	$$
	m'(z,\, y)\,:=\, (m(p(z),\,p(y)),\,  s'(y))\,.
	$$
	Note that $s(p(z))\,=\, t(p(y))$, so $m(p(z),\,p(y))$ is defined. Let $e'$ be the morphism
	$$
	e'\, :\, TX_0\, \longrightarrow\, Y_1\, ,\ ~
	v\,\longmapsto\, (e(\tau(v)),\, v).
	$$
	Finally, let
	$$
	i'\, :\, Y_1\, \longrightarrow\, Y_1,\,  \ ~
	(z,\, v)\, \longmapsto\, (i(z),\, \theta_z(v))
	$$
	be the involution. It is straightforward to check that $([Y_1\rightrightarrows TX_0],\,  s',\, t',\, m',
	\, e',\, i')$ is a Lie groupoid. In other words, $TX_0$ is a vector bundle over the groupoid $\XX$.
	
	\subsection{Characteristic differential forms for connections on $\XX$}\label{se3.3}
	We will now study the behaviour and interpretation of connections on Lie groupoids in terms of differential forms. Let $\mathcal H$ be a connection on the Lie groupoid $\XX=[X_1\rightrightarrows X_0]$. Using the canonical decomposition of the tangent space
	$$
	TX_1\,=\, {\mathcal H}\oplus \text{kernel}(ds)\, ,
	$$
	we get a projection
	$$
	\wedge^j TX_1\,\longrightarrow\, \wedge^j{\mathcal H}
	\,\hookrightarrow\, \wedge^j TX_1\, .
	$$
	The composition $\wedge^j TX_1\,\longrightarrow\,
	\wedge^j TX_1$ gives by duality an endomorphism of the space of
	$j$-forms on $X_1$.
	
	For a differential form $\omega$ on $X_1$, the differential form on $X_1$
	{\em induced} by the above endomorphism for the given connection $\mathcal H$ will be denoted by $H(\omega)$.
	
	\begin{definition}
		A {\em differential $j$-form} on the Lie groupoid $[X_1\rightrightarrows X_0]$ is a differential $j$-form
		$\omega$ on $X_0$ such that $\mc{H}(s^*\omega)\,=\, \mc{H}(t^*\omega)$.
	\end{definition}
	
	Induced differential forms for an integrable distribution observe the following basic property.
	
	\begin{proposition}\label{lem1}
		Let $\mathcal H$ be a connection on a Lie groupoid $\XX=[X_1\rightrightarrows X_0]$ and assume that the distribution ${\mathcal H}\, \subset\, TX_1$ is integrable. Let $\omega$ be a differential form on $X_0$ such that
		$\mc{H}(s^*\omega)\,=\, {\mc H}(t^*\omega)$. Then $\mc{H}(s^*d\omega)\,=\, \mc{H}(t^*d\omega)$.
	\end{proposition}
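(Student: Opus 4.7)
The plan is to reduce the statement to the single key identity
\[
H(d\alpha)\,=\,H(d(H\alpha))
\]
for every differential form $\alpha$ on $X_1$, which holds precisely because $\mathcal H$ is integrable. Once this identity is established, the conclusion follows formally: since exterior derivative commutes with pullback along $s$ and $t$, one computes
\[
H(s^*d\omega)\,=\,H(d(s^*\omega))\,=\,H(d(H(s^*\omega)))\,=\,H(d(H(t^*\omega)))\,=\,H(d(t^*\omega))\,=\,H(t^*d\omega),
\]
the middle equality being the hypothesis $H(s^*\omega)=H(t^*\omega)$.

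To prove the key identity, fix $y\in X_1$, tangent vectors $v_1,\ldots,v_{j+1}\in T_y X_1$, and let $\pi\colon TX_1\to\mathcal H$ denote the projection coming from the splitting $TX_1=\mathcal H\oplus\text{kernel}(ds)$. The idea is to compute $H(d\alpha)_y(v_1,\ldots,v_{j+1})=d\alpha_y(\pi v_1,\ldots,\pi v_{j+1})$ by extending $\pi v_1,\ldots,\pi v_{j+1}$ to local sections $V_1,\ldots,V_{j+1}$ of the subbundle $\mathcal H$ defined in a neighborhood of $y$, and then applying the invariant Cartan formula
\[
d\alpha(V_1,\ldots,V_{j+1})\,=\,\sum_i (-1)^{i-1} V_i\bigl(\alpha(V_1,\ldots,\widehat{V_i},\ldots,V_{j+1})\bigr)\,+\,\sum_{i<k}(-1)^{i+k}\alpha\bigl([V_i,V_k],V_1,\ldots,\widehat{V_i},\ldots,\widehat{V_k},\ldots,V_{j+1}\bigr).
\]
In the first sum, every entry $V_\ell$ is horizontal, so $\alpha(V_1,\ldots,\widehat{V_i},\ldots,V_{j+1})\,=\,(H\alpha)(V_1,\ldots,\widehat{V_i},\ldots,V_{j+1})$. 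In the second sum, the integrability of $\mathcal H$ guarantees via the Frobenius theorem that $[V_i,V_k]\in\Gamma(\mathcal H)$, so again every entry is horizontal and the value of $\alpha$ agrees with the value of $H\alpha$. Therefore the Cartan formula for $\alpha$ agrees pointwise with the Cartan formula for $H\alpha$ when evaluated on horizontal fields, giving $d\alpha(V_1,\ldots,V_{j+1})(y)=d(H\alpha)(V_1,\ldots,V_{j+1})(y)$, whence $H(d\alpha)=H(d(H\alpha))$.

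The main obstacle is precisely the bracket term in Cartan's formula: without integrability of $\mathcal H$, the commutators $[V_i,V_k]$ may have a component in $\text{kernel}(ds)$, so that $\alpha([V_i,V_k],\ldots)$ genuinely sees the non-horizontal part of $\alpha$, and the reduction to $H\alpha$ breaks down. The integrability hypothesis $[\mathcal H,\mathcal H]\subset\mathcal H$ is exactly what one needs to eliminate this obstruction, and this is where the assumption enters essentially.
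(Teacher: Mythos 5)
Your proof is correct, but it takes a different route from the paper's. The paper argues geometrically: through each point of $X_1$ it takes the local leaf $\iota\colon\mathcal L\hookrightarrow X_1$ of the integrable distribution $\mathcal H$, observes that $H(s^*\omega)=H(t^*\omega)$ forces $\iota^*s^*\omega=\iota^*t^*\omega$ (since $T\mathcal L=\mathcal H$ along the leaf and $H(\beta)$ is determined by $\beta\vert_{\mathcal H}$), and then simply commutes $d$ with $\iota^*$ to conclude. You instead isolate the algebraic identity $H(d\alpha)=H(d(H\alpha))$ and prove it with the invariant Cartan formula, using involutivity to keep the bracket terms horizontal; the conclusion then follows by a purely formal chain of equalities. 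Both arguments hinge on the same point (without integrability the bracket/leaf step fails), but your version is more self-contained in that it never needs the existence of integral submanifolds --- note that $[V_i,V_k]\in\Gamma(\mathcal H)$ is the definition of involutivity (which the paper takes as the definition of integrability), not a consequence of the Frobenius theorem, so you can drop that citation --- and it makes explicit the reusable lemma that $H\circ d$ factors through $H$. The paper's version is shorter and transfers immediately to any statement that can be checked leafwise. Either proof is acceptable.
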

	
	\begin{proof}
		Take a point $x\,\in\, X_1$. Let ${\mathcal L}$ be the locally defined
		leaf of ${\mathcal H}$ passing through $x$. Let
		$$
		\iota\, :\, {\mathcal L}\, \hookrightarrow\, X_1
		$$
		be the inclusion map. Since $\mc{H}(s^*\omega)\,=\, \mc{H}(t^*\omega)$, it follows
		immediately that $\iota^*s^*\omega\,=\, \iota^*t^*\omega$. Therefore we have,
		$$
		\iota^*s^*(d\omega)\,=\, d \iota^*s^*\omega\,=\,d \iota^*t^*\omega
		\,=\,\iota^*t^*(d\omega)\, .
		$$
		But this implies that $\mc{H}(s^*(d\omega))(x)\,=\, \mc{H}(t^*(d\omega))(x)$ and so
		we conclude that $\mc{H}(s^*(d\omega))\,=\, \mc{H}(t^*(d\omega))$, which finishes the proof.
	\end{proof}

	\section{Connections on principal bundles over Lie groupoids and differentiable stacks}
	\label{Bundles}
	
	We will now study in detail the notion and interplay of connections on principal $G$-bundles over Lie groupoids and differentiable stacks. 
	
	\subsection{Connections on principal $G$-bundles over Lie groupoids}
	
	Let us start with the groupoid picture. Let $(\XX\,=\,[X_1\rightrightarrows X_0],\,  s,\,  t,\,  m,\,
	e,\,  i)$ be a Lie groupoid and $G$ a Lie group. The Lie algebra of $G$ will be denoted by $\mathfrak g$. We shall consider $\mathfrak g$ as a $G$-module using the adjoint action. Let
	\begin{equation}\label{alpha}
	\alpha\, :\, E_G\, \longrightarrow\, X_0
	\end{equation}
	be a principal $G$-bundle over $X_0$. The adjoint vector bundle for $E_G$
	$$
	\text{ad}(E_G)\, :=\, E_G\times^G{\mathfrak g}\, \longrightarrow\, X_0
	$$
	is the bundle associated to $E_G$ for the adjoint action of $G$ on $\mathfrak g$. The action of
	$G$ on $E_G$ induces an action of $G$ on the direct image $\alpha_*TE_G$, where $\alpha$ is the
	projection as given in \eqref{alpha}. 
	
	\begin{definition}
		The {\em Atiyah bundle} for a principal $G$-bundle $E_G$ over $X_0$ is the invariant direct image
		\begin{equation}\label{eq:a-e2}
		\text{At}(E_G)\,:=\, (\alpha_*TE_G)^G\, \subset\, \alpha_*TE_G\, .
		\end{equation}
	\end{definition}
	
	Therefore, we have $\text{At}(E_G)\,=\, (TE_G)/G$ (see also \cite{At} for the classical and analogue notion of the 
	Atiyah bundle in the complex analytic context).
	
	Let
	\begin{equation}\label{qE}
	q_E\, :\, TE_G\, \longrightarrow\, (TE_G)/G\,=\, \text{At}(E_G)
	\end{equation}
	be the quotient map.
	
	Let $T_{\rm rel} \, \subset\, TE_G$ be the relative tangent bundle for the projection
	$\alpha$ in \eqref{alpha}. It fits into the short exact sequence of vector bundles
	\begin{equation}\label{eq:a-e3}
	0\, \longrightarrow\, T_{\rm rel}\, \stackrel{\iota_0}{\longrightarrow}\, TE_G\,
	\stackrel{d\alpha}{\longrightarrow}\, \alpha^* TX_0 \, \longrightarrow\, 0\, ,
	\end{equation}
	where $d\alpha$ is the differential of $\alpha$. Using the action of $G$ on $E_G$, the vector bundle
	$T_{\rm rel} \,\longrightarrow\, E_G$ gets identified with the trivial vector bundle
	$E_G\times\mathfrak g\,\longrightarrow\, E_G$. Therefore, we see that
	$$
	\text{ad}(E_G)\,=\, (\alpha_*T_{\rm rel})^G\, ,
	$$
	and so $\text{ad}(E_G)\,=\,(\alpha_*T_{\rm rel})/G$. The map of quotients
	$T_{\rm rel}/G\, \longrightarrow\, (TE_G)/G$ induced by the inclusion
	$T_{\rm rel} \,\hookrightarrow\, TE_G$ makes 
	$\text{ad}(E_G)$ a subbundle of $\text{At}(E_G)$.
	The differential of $d\alpha$ in \eqref{eq:a-e3} being $G$-equivariant therefore produces a homomorphism
	\begin{equation}\label{eda}
	(d\alpha)'\, :\, \text{At}(E_G)\,\longrightarrow\, TX_0\, .
	\end{equation}
	Combining these we obtain the {\em Atiyah exact sequence} (see \cite{At}):
	\begin{equation}\label{eq:a-e-s}
	0\,\longrightarrow\, \text{ad}(E_G)\,\longrightarrow\,\text{At}(E_G)\,
	\stackrel{(d\alpha)'}{\longrightarrow}\, TX_0 \,\longrightarrow\, 0\, .
	\end{equation}
	This exact sequence is the quotient of the exact sequence in \eqref{eq:a-e3} by the
	action of $G$. Now we can define the general notion of a connection on a principal 
	$G$-bundle (compare \cite{At})
	
	\begin{definition}
		A {\em connection} on a principal $G$-bundle $E_G$ over $X_0$ is a splitting of the Atiyah exact sequence
		\begin{equation}
		0\,\longrightarrow\, \text{ad}(E_G)\,\longrightarrow\,\text{At}(E_G)\,
		\stackrel{(d\alpha)'}{\longrightarrow}\, TX_0 \,\longrightarrow\, 0\, .
		\end{equation}
	\end{definition}
	
	Giving a splitting of the exact sequence in \eqref{eq:a-e-s} is evidently equivalent to
	giving a $G$-equivariant splitting of the exact sequence in \eqref{eq:a-e3}. Therefore,
	a connection on $E_G$ is a homomorphism
	$$
	D_{E_G}\, :\, TE_G \,\longrightarrow\, T_{\rm rel}
	$$
	such that
	\begin{itemize}
		\item $D_{E_G}$ is $G$-equivariant, and
		
		\item $D_{E_G}\circ\iota_0\,=\, \text{Id}_{T_{\rm rel}}$, where $\iota_0$ is the
		homomorphism in \eqref{eq:a-e3}.
	\end{itemize}
	Recall that $T_{\rm rel}\,=\, E_G\times\mathfrak g$. Therefore, any homomorphism
	$D_{E_G}\, :\, TE_G \,\longrightarrow\, T_{\rm rel}$ satisfying the above two conditions is
	a $\mathfrak g$-valued $1$-form $\widehat{D}_{E_G}$ on $E_G$ such that the corresponding homomorphism
	\begin{equation}\label{cf}
	\widehat{D}_{E_G}\, :\, TE_G\,\longrightarrow\, \mathfrak g
	\end{equation}
	is $G$-equivariant for the adjoint action of $G$ on $\mathfrak g$.
	
	Now we equip $E_G$ with the structure of a principal $G$-bundle on the fixed Lie groupoid $\XX\,=\,
	[X_1\rightrightarrows X_0]$. First we shall use the description of a principal $G$-bundle over
	$\XX$ given in Definition \ref{defpbo}. So we have a
	Lie groupoid ${\mathbb E}_G\,=\,[Q\rightrightarrows P]$
	together with a groupoid morphism
	\begin{equation}\label{gpm}
	\xymatrix{G\rto & Q\rto^{\tau} \ar@<-2pt>[d]_{\widetilde{t}} \ar@<+2pt>[d]^{\widetilde{s}}&
		X_1\ar@<-2pt>[d]_t \ar@<+2pt>[d]^s\\
		G\rto & E_G\rto^{\pi} & X_0}
	\end{equation}
	such that both $Q\,\stackrel{\tau}\longrightarrow\, X_1$ and 
	$E_G\,\stackrel{\pi}\longrightarrow\, X_0$ are principal $G$-bundles, and the four conditions in 
	Definition \ref{defpbo} are satisfied.
	
	Let $\nabla$ be a connection on the principal $G$-bundle $E_G\,\longrightarrow\, X_0$. Let
	\begin{equation}\label{cf2}
	\widehat{\nabla}\, :\, TE_G\,\longrightarrow\, \mathfrak g
	\end{equation}
	be the $G$-equivariant $\mathfrak g$-valued $1$-form on $E_G$ as in \eqref{cf} corresponding to $\nabla$.
	
	\begin{definition}[Connection on a principal bundles over a Lie groupoid]\label{dco0}
		A {\em connection} on a principal $G$-bundle ${\mathbb E}_G\,=\,[Q\rightrightarrows 
		E_G]$ over the Lie groupoid $\XX\,=\,[X_1\rightrightarrows X_0]$ is a connection $\nabla$ 
		on the principal $G$-bundle $E_G\, \longrightarrow\, X_0$ such that the two $\mathfrak g$-valued $1$-forms
		$\widetilde{s}^* \widehat{\nabla}$ and $\widetilde{t}^* \widehat{\nabla}$ on $Q$ coincide, where
		$\widehat{\nabla}$ is the $1$-form $\widehat{\nabla}\, :\, TE_G\,\longrightarrow\, \mathfrak g$ associated to
		the connection $\nabla$.
	\end{definition}
	
	Now we adopt Definition \ref{PrinGGr}. Let
	$${\mathbb E}_G\,:=\,[s^*E_G\rightrightarrows E_G]$$
	be a principal $G$-bundle over $\mathbb X$ as in 
	Definition \ref{PrinGGr}. Let $\widehat{s}\, :\, s^*E_G\,\longrightarrow\, E_G$
	and $\widehat{t}\, :\, s^*E_G\,\longrightarrow\, E_G$ respectively be the source map and the target map.
	
	The following definition is then evidently equivalent to Definition \ref{dco0}.

	\begin{definition}\label{dco}
		A {\em connection} on the principal $G$-bundle ${\mathbb E}_G\,=\,[s^*E_G\rightrightarrows E_G]$
		over the Lie groupoid $\XX\,=\,[X_1\rightrightarrows X_0]$ is a connection $\nabla$ on the principal
		$G$-bundle $E_G\, \longrightarrow\, X_0$ such that the two $\mathfrak g$-valued $1$-forms
		$\widehat{s}^* \widehat{\nabla}$ and $\widehat{t}^* \widehat{\nabla}$ on $s^*E_G$ coincide, where
		$\widehat{\nabla}$ is the $1$-form $\widehat{\nabla}\, :\, TE_G\,\longrightarrow\, \mathfrak g$ associated to
		the connection $\nabla$.
	\end{definition}
	
	\begin{definition}
		Let $\widehat\nabla$ be a connection on a principal $G$-bundle ${\mathbb E}_G\,=\,[s^*E_G\rightrightarrows
		E_G]$ over the Lie groupoid $\XX\,=\,[X_1\rightrightarrows X_0]$ given by a connection $\nabla$ on $E_G$.
		The {\it curvature} of $\widehat\nabla$ is defined to be the curvature of
		$\nabla$. In particular, the connection $\widehat\nabla$ is called \textit{flat} if $\nabla$ is integrable.
	\end{definition}
	
	\begin{proposition}\label{cpb}
		Let ${\mathbb X}\,=\,[X_1\rightrightarrows X_0]$ and ${\mathbb
			Y}\,=\,[Y_1\rightrightarrows Y_0]$ be Lie groupoids, and let 
		$\phi\,:\, {\mathbb X}\,\longrightarrow\, {\mathbb Y}$ be a 
		morphism given by maps
		$$\phi_0\,:\, X_0\,\longrightarrow\, Y_0\, , \ \ \phi_1\,:\, X_1\,\longrightarrow\, Y_1$$
		(see Definition \ref{dme}). Let 
		$${\mathbb E}_G\,:=\,[s^*E_G\rightrightarrows E_G]$$
		be a principal $G$-bundle on $\mathbb Y$ equipped with a connection 
		$\widehat\nabla$ given by a connection $\nabla$ on the principal $G$-bundle
		$E_G\, \longrightarrow\, Y_0$. Then the pulled back connection $\phi^*_0\nabla$
		on $\phi^*E_G\, \longrightarrow\, X_0$ is a connection on the principal $G$-bundle
		$\phi^*{\mathbb E}_G$ over the Lie groupoid $\mathbb X$.
	\end{proposition}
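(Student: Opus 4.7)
The plan is to verify the defining condition of Definition \ref{dco} for the pullback data, by a standard naturality argument: pulling back the identity $\widehat{s}^*\widehat{\nabla} = \widehat{t}^*\widehat{\nabla}$ on $s^*E_G$ along the map of groupoid-pullback bundles induced by $\phi$.

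First, I would set up notation carefully. Let $s_X, t_X$ (respectively $s_Y, t_Y$) denote the source and target maps of $\mathbb X$ (respectively $\mathbb Y$). Write $\pi \colon E_G \longrightarrow Y_0$ for the bundle projection and let $\widetilde\phi_0 \colon \phi_0^* E_G \longrightarrow E_G$ be the canonical lift of $\phi_0$ covering the projection $\phi_0^* E_G \longrightarrow X_0$. The $\mathbb Y$-action on $E_G$ induces an $\mathbb X$-action on $\phi_0^* E_G$ defined, for $(\gamma, e)$ with $s_X(\gamma) = $ base point of $e$, by $\gamma \cdot e := \widetilde\phi_0^{-1}\!\bigl(\phi_1(\gamma)\cdot \widetilde\phi_0(e)\bigr)$; the conditions on $\phi$ in Definition \ref{dme} guarantee this is well-defined, smooth, $G$-equivariant and satisfies the axioms of Definition \ref{PrinGGr}. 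This gives $\phi^*\mathbb E_G = [s_X^*(\phi_0^*E_G) \rightrightarrows \phi_0^*E_G]$ the structure of a principal $G$-bundle over $\mathbb X$.

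Next, I would observe that the map $\widetilde\phi_0$ together with $\phi_1$ induces a natural map
$$
\widetilde\phi \,\colon\, s_X^*(\phi_0^*E_G) \,\longrightarrow\, s_Y^* E_G\,,\qquad (\gamma, e) \,\longmapsto\, \bigl(\phi_1(\gamma),\, \widetilde\phi_0(e)\bigr)\,,
$$
and that by construction of the $\mathbb X$-action the two squares
$$
\widehat{s}_Y \circ \widetilde\phi \,=\, \widetilde\phi_0\circ \widehat{s}_X\,,\qquad
\widehat{t}_Y \circ \widetilde\phi \,=\, \widetilde\phi_0\circ \widehat{t}_X
$$
commute. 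Since the connection $1$-form corresponding to $\phi_0^*\nabla$ is precisely $\widetilde\phi_0^*\widehat\nabla$, applying pullback functoriality yields
$$
\widehat{s}_X^*\bigl(\widetilde\phi_0^*\widehat\nabla\bigr) \,=\, \widetilde\phi^*\bigl(\widehat{s}_Y^*\widehat\nabla\bigr)\,,\qquad
\widehat{t}_X^*\bigl(\widetilde\phi_0^*\widehat\nabla\bigr) \,=\, \widetilde\phi^*\bigl(\widehat{t}_Y^*\widehat\nabla\bigr)\,.
$$
By hypothesis $\widehat\nabla$ is a connection on $\mathbb E_G$ over $\mathbb Y$, so $\widehat{s}_Y^*\widehat\nabla = \widehat{t}_Y^*\widehat\nabla$; pulling back by $\widetilde\phi$ gives the required equality $\widehat{s}_X^*\bigl(\widetilde\phi_0^*\widehat\nabla\bigr) = \widehat{t}_X^*\bigl(\widetilde\phi_0^*\widehat\nabla\bigr)$, which is exactly the condition in Definition \ref{dco}.

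The main obstacle is not a deep calculation but rather the bookkeeping: one must verify that the pullback principal $G$-bundle $\phi_0^*E_G$ really inherits a compatible $\mathbb X$-action from the $\mathbb Y$-action on $E_G$, and that the induced map $\widetilde\phi$ on source fibre products intertwines the source and target maps of $\phi^*\mathbb E_G$ and $\mathbb E_G$. Once these naturality squares are in place, the verification of Definition \ref{dco} is immediate from pullback functoriality of differential forms. No additional hypothesis on $\phi$ (such as being Morita or étale) is needed, because the condition on the connection involves only pulling back an equality of $1$-forms.
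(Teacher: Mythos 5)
Your proposal is correct and follows essentially the same route as the paper: the paper's proof simply records that $s_Y\circ\phi_1=\phi_0\circ s_X$ and $t_Y\circ\phi_1=\phi_0\circ t_X$ and declares the verification of Definition \ref{dco} for $\phi_0^*\nabla$ to be straightforward, and your argument is exactly that verification written out, via the induced map $\widetilde\phi$ on source fibre products and pullback functoriality of the identity $\widehat{s}^*\widehat\nabla=\widehat{t}^*\widehat\nabla$. The only cosmetic point is that $\widetilde\phi_0^{-1}$ should be read fibrewise (identifying the fibre of $\phi_0^*E_G$ over $x$ with that of $E_G$ over $\phi_0(x)$), which is the standard description of the induced action and does not affect the argument.
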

	
	\begin{proof}
		Let $s$ and $s'$ (respectively, $t$ and $t'$) be the source maps (respectively,
		target maps) of ${\mathbb X}$ and ${\mathbb Y}$ respectively.
		The two maps $s'\circ\phi_1$ and $\phi_0\circ s$ from $X_1$ to $Y_0$ coincide.
		Similarly, the two maps $t'\circ\phi_1$ and $\phi_0\circ t$ from $X_1$ to $Y_0$ also coincide.
		
		Using this, it is straightforward to check that the connection $\phi^*_0\nabla$
		on $\phi^*E_G\, \longrightarrow\, X_0$ is a connection on the principal $G$-bundle
		$\phi^*{\mathbb E}_G$ over the groupoid $\mathbb X$.
	\end{proof}
	
	\begin{example}
	A connection on a principal $G$-bundle  ${\mathbb E}_G\,=\,[Q\rightrightarrows 
		E_G]$ over the Lie groupoid $\XX\,=\,[X\rightrightarrows X]$ is the same as 
		a connection on the (ordinary) principal $G$-bundle $E_G\rightarrow X$ over the smooth space $X$. 
	\end{example}
	\begin{example}
	Let $\pi_0\colon\,E\rightarrow X_0$ be a vector bundle and $\pi_G\colon E_G\rightarrow X_0$ a principal $G$-bundle over  $X_0.$ Let $\mb{X}:=[E\rra X_0]$ be the 	Lie groupoid introduced in Example~\ref{Example:Vectorbundlegroupoid}. Then  $E_G$ is an $\mb{X}$-space with respect to  the map $(v, p)\mapsto p,$ for all $(v, p)\in E\times E_G$ satisfying  $\pi_0(v)=s(v)=\pi_G(p).$ The action of $\mb{X}$ is obviously compatible with the action of $G$ on $E_G,$ and thus we obtain a principal $G$-bundle over $\mb{X}.$ Then any connection on the (ordinary) principal $G$-bundle $E_G\rightarrow X_0$ defines a connection on the principal $G$-bundle over $\mb{X}.$	
	
	\end{example}
	
	\begin{example}
	Let $E_G\rightarrow X_0$ be  an $H$-equivariant principal $G$-bundle over $X_0.$	Let ${\mathbb X}\,=\,[H\times X_0\rightrightarrows
		X_0]$ be the transformation groupoid. As in Example~\ref{Ex:G-Hequi}, consider $E_G\rightarrow X_0$ to be a $G$-bundle over ${\mathbb X}.$ 
		Then a connection $1$-form on the (ordinary) principal $G$-bundle over $X_0$	 is a connection on 	the $G$-bundle over $\mb{X}$ if and only if it is 
		$H$-invariant and vanishes on the fundamental vector field generated by the action of $H$ on $E_G.$
	
	\end{example}
	
\begin{remark} The definition of a connection on a principal $G$-bundle over a Lie groupoid given in this article is less rigid then the one by Laurent-Gengoux, Tu and Xu \cite[Def. 3.5]{LTX}. the difference between our approach and the one in \cite{LTX} becomes clear when comparing the associated de Rham complexes and Chern-Weil maps, which is discussed in our follow-up article \cite[Sect.~5 \& 6]{BCKN2}. In particular, it is not hard to see that the two definitions coincide in the case of principal $G$-bundles over an \'etale Lie groupoid $\XX\,=\,[X_1 \rightrightarrows X_0]$ with integrable distribution ${\mathcal H}\,=\,TX_1$.
\end{remark}

The space of connections on a principal $G$-bundle ${\mathbb E}_G$ over the Lie groupoid $\XX$ is an
affine space for the space of all $\text{ad}(E_G)$-valued $1$-forms on the groupoid.
	
	Henceforth, given a Lie groupoid $\XX\,=\,[X_1 \rightrightarrows X_0]$ we will assume that there exist a given 
	integrable distribution ${\mathcal H}\, \subset\, TX_1$, in other words, we have given a {\em flat} connection on 
	the Lie groupoid $\XX$.
	
	Consider now again the Atiyah exact sequence as constructed in (\ref{eq:a-e-s}). The Lie bracket
	of vector fields defines a Lie algebra structure on the sheaves
	of sections of all three vector bundles.
	The Lie algebra structure on the sheaf of sections of $\text{ad}(E_G)$
	is linear with respect to the multiplication by functions on $X_0$, or
	in other words, the fibers of $\text{ad}(E_G)$ are Lie algebras.
	
	Let $\mathfrak g$ be again the Lie algebra for the Lie group $G$.
	Recall that $\text{ad}(E_G)\,=\,(E_G\times {\mathfrak g})/G$ is the vector bundle on $X_0$ associated to
	principal $G$-bundle $E_G$ for the adjoint action of $G$ on $\mathfrak g$. 
	Since this adjoint action of $G$ preserves the Lie
	algebra structure of $\mathfrak g$, it follows that the fibers
	of $\text{ad}(E_G)$ are Lie algebras identified with $\mathfrak g$ up to
	conjugations.
	
	Given any splitting of the Atiyah exact sequence (\ref{eq:a-e-s})
	$$
	D\, :\, TX_0\, \longrightarrow\,\text{At}(E_G)\, ,
	$$
	the obstruction for $D$ to be compatible with the Lie algebra structure
	is given by a section
	$$
	K(D)\, \in\, H^0(X_0,\, \text{ad}(E_G)\otimes\Omega^2_{X_0})\, ,
	$$
	which is an $\text{ad}(E_G)$-valued $2$-form on the groupoid $\XX$. 
	
	\begin{definition}
		For a  principal $G$-bundle ${\mathbb E}_G=[s^*E_G\rightrightarrows E_G]$ over a Lie groupoid $\XX=[X_1\rightrightarrows X_0]$ with a connection $D$, the section $K(D)$ is called the {\em curvature} of the connection $D$.
	\end{definition}
	
	The above constructions readily imply now the following result, which allows for the development of a general Chern-Weil theory and the construction of characteristic classes. This will be discussed systematically in \cite{BCKN2}.
	
	\vspace*{0.25cm}
	\begin{theorem}
		For any invariant form $\nu\,\in\, (\text{Sym}^k({\mathfrak g}^*))^G$, the evaluation
		$\nu(K(D))$ on the curvature $K(D)$ is a closed $2k$-form on the Lie groupoid $\XX$.
	\end{theorem}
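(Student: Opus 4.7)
The plan is to combine the classical Chern--Weil argument on $X_0$ with the groupoid compatibility coming from the connection condition in Definition~\ref{dco}, and then invoke Proposition~\ref{lem1} via the integrability of $\mathcal H$. Concretely, I would establish three things: (a) $\nu(K(D))$ is a well-defined $2k$-form on $X_0$; (b) it is closed as a form on $X_0$; (c) it satisfies the groupoid compatibility $\mathcal H(s^*\nu(K(D)))=\mathcal H(t^*\nu(K(D)))$, so that together with (b) it is a closed $2k$-form on $\XX$.

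\textbf{Step 1 (descent to $X_0$).} The curvature $K(D)$ is an $\mathrm{ad}(E_G)$-valued $2$-form; lifting to $E_G$ it corresponds to the $\mathfrak g$-valued $2$-form $\widehat K(\widehat\nabla)=d\widehat\nabla+\tfrac12[\widehat\nabla,\widehat\nabla]$. The hypothesis $\nu\in(\mathrm{Sym}^k(\mathfrak g^*))^G$ means the polynomial evaluation on $\widehat K(\widehat\nabla)^{\otimes k}$ is $G$-invariant and horizontal, hence descends to a $2k$-form on $X_0$; this is exactly $\nu(K(D))$.

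\textbf{Step 2 (closedness on $X_0$).} This is the classical Chern--Weil computation. The Bianchi identity $d\widehat K(\widehat\nabla)=[\widehat K(\widehat\nabla),\widehat\nabla]$ together with the $G$-invariance of $\nu$ gives
\begin{equation*}
d\,\nu(K(D))\;=\;k\,\nu\bigl([\widehat K(\widehat\nabla),\widehat\nabla],\widehat K(\widehat\nabla),\ldots,\widehat K(\widehat\nabla)\bigr)\;=\;0,
\end{equation*}
where the last equality is the infinitesimal version of $G$-invariance applied coordinate-wise. This is completely standard and I would invoke it rather than repeat it.

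\textbf{Step 3 (groupoid compatibility).} This is the step that uses the defining condition in Definition~\ref{dco}, namely $\widehat s^*\widehat\nabla=\widehat t^*\widehat\nabla$ on $s^*E_G$. The maps $\widehat s,\widehat t\colon s^*E_G\to E_G$ exhibit $s^*E_G\to X_1$ as both the source-pullback and (via the $X_1$-action) the target-pullback of $E_G\to X_0$ as principal $G$-bundles. Since the two connection $1$-forms agree, so do their curvatures, and hence so do the $\nu$-evaluations: $\widehat s^*\nu(K(D))=\widehat t^*\nu(K(D))$ as $2k$-forms on $s^*E_G$. Pushing down through the principal $G$-bundle $s^*E_G\to X_1$ (both $\nu(K(D))$-forms are $G$-basic), this yields
\begin{equation*}
s^*\nu(K(D))\;=\;t^*\nu(K(D))\quad\text{on }X_1,
\end{equation*}
which is much stronger than the required $\mathcal H(s^*\nu(K(D)))=\mathcal H(t^*\nu(K(D)))$. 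Combined with Step~2 and Proposition~\ref{lem1} (which uses integrability of $\mathcal H$), this certifies that $\nu(K(D))$ is a closed $2k$-form on the groupoid $\XX$.

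The main obstacle I anticipate is Step~3, specifically the bookkeeping to pass from the equality of $\mathfrak g$-valued $1$-forms $\widehat s^*\widehat\nabla=\widehat t^*\widehat\nabla$ on $s^*E_G$ to the equality of the scalar-valued characteristic forms on $X_1$. One has to identify $s^*E_G$ with $t^*E_G$ via the $X_1$-action on $E_G$, check that the corresponding isomorphism of principal $G$-bundles over $X_1$ intertwines the two pulled-back connections, and then observe that Chern--Weil classes of isomorphic connections coincide; Step~2 can then be seen as a consequence of this descent. Steps~1 and~2 are essentially a recollection of the classical theory applied to the ordinary bundle $E_G\to X_0$; the genuinely new content is the groupoid equivariance in Step~3.
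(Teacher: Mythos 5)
Your argument is correct. Note, however, that the paper itself supplies no proof of this theorem: it states only that the preceding constructions ``readily imply'' the result and defers the systematic treatment to the follow-up article \cite{BCKN2}, so there is no in-paper argument to compare against. Your three steps are exactly the details that the phrase ``readily imply'' is gesturing at: Steps~1 and~2 are the classical Chern--Weil theory for $E_G\to X_0$, and Step~3 is the genuinely groupoid-theoretic point. That step is sound: since $\widehat s,\widehat t\colon s^*E_G\to E_G$ are $G$-equivariant bundle maps covering $s\circ\tau$ and $t\circ\tau$ (with $\tau\colon s^*E_G\to X_1$ the bundle projection), the equality $\widehat s^*\widehat\nabla=\widehat t^*\widehat\nabla$ of Definition~\ref{dco} forces $\tau^*s^*\nu(K(D))=\tau^*t^*\nu(K(D))$, and injectivity of $\tau^*$ (as $\tau$ is a surjective submersion) yields $s^*\nu(K(D))=t^*\nu(K(D))$ on the nose, which is indeed stronger than the condition $\mathcal{H}(s^*\nu(K(D)))=\mathcal{H}(t^*\nu(K(D)))$ defining a form on $\XX$. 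One small simplification: since $\nu(K(D))$ is literally closed on $X_0$ by Step~2, the exact form $d\,\nu(K(D))=0$ trivially satisfies the groupoid compatibility, so the appeal to Proposition~\ref{lem1} and to integrability of $\mathcal H$ is not actually needed for this particular statement.
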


	\subsection{Connections on principal $G$-bundles over differentiable stacks}
	Finally, we shall turn to the stacky picture and study connections for principal $G$-bundles on a differentiable stack (compare also 
	\cite{BMW}).
	
	\begin{definition}[Connection on principal bundle over differentiable stack]
		Let ${\mathscr E}_G$ be a principal $G$-bundle on a differentiable stack $\X$. A {\it connection} $\nabla$ on ${\mathscr E}_G$ 
		consists of the data of a connection $\nabla_u$ on each principal $G$-bundle ${\mathscr E}_{G, u}$, where $u\,:\,
		U\,\longrightarrow\, \X$ is a smooth atlas for $\X$, which pulls back naturally with respect to each $2$-commutative
		diagram of the form
		$$\xy \xymatrix{U\ar[dr]_{u}\ar[rr]^{\varphi}& & V\ar[dl]^v\\
			&{\mathscr X. }&}
		\endxy$$
		A connection $\nabla$ on ${\mathscr E}_G$ is {\it flat} or {\it integrable} if it is in addition integrable on each principal $G$-bundle ${\mathscr E}_{G, u}$.	\end{definition}
	
	Let us unravel this definition with more details. We can realize the connections for each atlas $u$ in terms of $\mathfrak 
	g$-valued $1$-forms such that  $\omega_{{\mathscr E}_{G, u}}\,\in\, H^0({{\mathscr E}_{G, u}},\, \Omega^1_{{\mathscr E}_{G, u}}\otimes \g)$ is the corresponding 
	$1$-form for the connection $\nabla_u$ on ${\mathscr E}_{G, u}$. From the Cartesian diagram
	\[
	\xy \xymatrix{\varphi^*\E_{G,v} \rrto^{\bar{\varphi}}\dto &&\E_{G, v}\dto \\
		U\rrto_{\varphi}& & V}
	\endxy
	\]
	we get a connection $\bar{\varphi}^*\omega_{\E_{G,v}}$ on $\varphi^*\E_{G,v}$. The condition for the existence of a connection is then given as follows
	$$\omega_{\E_{G, u}}\,=\, \theta_{\varphi, \alpha}^* \bar{\varphi}^* \omega_{\E_{G,v}}\, .$$
	
	From now on, we will assume that $\X$ is a Deligne--Mumford stack, which in particular means that the tangent stack 
	$T\X$ gives rise to a vector bundle $T\X\,\longrightarrow\, \X$ over $\X$. For a general differentiable stack, this is not always the case (see \cite{B1}, \cite{LTX}, \cite{Hp}).
	
	Given a principal $G$-bundle $\E_G$ over $\mathscr X$, we define the associated {\em Atiyah bundle} $\text{At}(\E_G)$ over $\X$ by setting
	$$\text{At}(\E_G)_u\,:=\,\text{At}(\E_{G, u})\, .$$
	for any \'etale morphism $u\,:\, U\,\longrightarrow\, \X$. In addition, for each $2$-commutative diagram of the form 
	\begin{equation}
	\xy \xymatrix{U\ar[dr]_{u}\ar[rr]^{\varphi}& & V\ar[dl]^v\\
		&{\mathscr X }&}
	\endxy
	\end{equation}
	where $u, v. \varphi$ are \'etale morphisms, we can compose the isomorphism $\text{At}(\E_{G, u})
	\,\stackrel{\cong}{\longrightarrow}\, \text{At}(\varphi^*\E_{G, v})$ with the isomorphism
	$\text{At}(\varphi^*\E_{G, v})\,\stackrel{\cong}{\longrightarrow}\, \varphi^*\text{At}(\E_{G, u})$ to get an isomorphism
	$$\text{At}(\E_G)_u \stackrel{\cong}\longrightarrow \varphi^*\text{At}(\E_G)_v.$$
	Similarly, given a principal $G$-bundle $\E_G$ over $\X$, we can define the associated {\it adjoint bundle}
	$\text{ad}(\E_G)$ over $\X$ arising from the adjoint representation of $G$ by setting (compare \cite[Sect. 1.4]{BMW})
	$$\text{ad}(\E_G)\,:=\, \E_G \times^G\mathfrak{g}\, ,$$
	where $\text{ad}(\E_G)_u\,=\,\text{ad}(\E_{G, u}).$
	We then obtain a commutative diagram of vector bundles
	
	$$
	\xymatrix{0\rto &\text{ad}(\E_G)_u\rto\dto & \text{At}(\E_G)_ u\rto\dto & TU \rto\dto& 0\\
		0\rto & \varphi^* \text{ad}(\E_G)_v\rto & \varphi^* \text{At}(\E_G)_v\rto & \varphi^* TV\rto &0.}
	$$
	which gives a well-defined short exact sequence of vector bundles over the Deligne--Mumford stack $\X$, called the 
	{\it Atiyah exact sequence associated to the principal $G$-bundle} $\E_G$
	$$
	\xymatrix{0\rto &\text{ad}(\E_G)\rto & \text{At}(\E_G)\rto & T\X \rto& 0.}
	$$
	
	\begin{remark} In the situation of a general differentiable stack $\X$, similarly as for the tangent stack, $\text{At}(\E_G)$ 
		and $\text{ad}(\E_G)$ will generally {\em not} give vector bundles over $\X$.
	\end{remark}
	
	From the constructions and considerations above, it follows now
	
	\begin{proposition}
		A principal $G$-bundle $\E_G$ over a Deligne--Mumford stack $\X$ admits a connection if and only if its associated Atiyah exact sequence has a splitting.
	\end{proposition}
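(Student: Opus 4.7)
The plan is to reduce the statement to the classical Atiyah correspondence on smooth spaces---the bijection between connections on an ordinary principal $G$-bundle $E_G\to U$ and splittings of its Atiyah exact sequence---applied atlas-wise. Since $\X$ is Deligne--Mumford, one knows (by the construction immediately preceding the proposition) that $T\X$, $\text{At}(\E_G)$ and $\text{ad}(\E_G)$ are genuine vector bundles on $\X$, obtained by gluing their atlas-wise incarnations along étale transition data, and that the Atiyah sequence on $\X$ is a bona fide short exact sequence of such stack vector bundles. Hence specifying a global splitting is the same as specifying a descent-compatible family of atlas-wise splittings, and the proof becomes a dictionary between this descent data and the definition of a connection on $\E_G$.

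\textbf{Two directions.} For the forward implication, given a connection $\nabla=\{\nabla_u\}$ on $\E_G$, each $\nabla_u$ provides by the classical theorem a splitting $D_u\colon TU\to\text{At}(\E_{G,u})$ of the local Atiyah sequence. The pullback compatibility $\omega_{\E_{G,u}}=\theta_{\varphi,\alpha}^*\bar\varphi^*\omega_{\E_{G,v}}$ imposed on the family $\{\nabla_u\}$ translates, via the natural isomorphisms $\text{At}(\varphi^*\E_{G,v})\cong\varphi^*\text{At}(\E_{G,v})$ employed in the very construction of $\text{At}(\E_G)$, into the identity $D_u=\varphi^*D_v$ under the atlas-wise identifications. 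This is exactly the descent datum for a morphism of stack vector bundles $D\colon T\X\to\text{At}(\E_G)$, and the local splitting identity $(d\alpha)'\circ D_u=\mathrm{id}_{TU}$ is atlas-local and thus upgrades to a global splitting $(d\alpha)'\circ D=\mathrm{id}_{T\X}$. Conversely, a splitting $D\colon T\X\to\text{At}(\E_G)$ pulls back along any étale atlas $u\colon U\to\X$ to a splitting $D_u:=u^*D$ of the Atiyah sequence on $U$, which by the classical correspondence yields a connection $\nabla_u$ on $\E_{G,u}$; the global nature of $D$ as a stack morphism forces the family $\{\nabla_u\}$ to satisfy the pullback compatibility required to define a connection on $\E_G$.

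\textbf{Main obstacle.} The only nontrivial point is bookkeeping of the identifications: one must verify that the isomorphism $\text{At}(\varphi^*\E_{G,v})\xrightarrow{\cong}\varphi^*\text{At}(\E_{G,v})$---precisely the one used to glue the atlas-wise Atiyah bundles into the stack vector bundle $\text{At}(\E_G)$---sends the splitting induced by $\bar\varphi^*\omega_{\E_{G,v}}$ to the pullback of the splitting induced by $\omega_{\E_{G,v}}$. Once this naturality is in place, both implications reduce to the classical Atiyah theorem applied atlas-wise and the proposition follows.
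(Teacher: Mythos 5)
Your argument is correct and is essentially the proof the paper intends: the paper offers no separate proof, asserting the proposition follows ``from the constructions and considerations above,'' namely the atlas-wise definition of a connection on $\E_G$ and the gluing of the atlas-wise Atiyah sequences into a short exact sequence of vector bundles over the Deligne--Mumford stack $\X$. Your reduction to the classical Atiyah correspondence on each \'etale atlas, together with the observation that the pullback compatibility of the $\nabla_u$ is exactly the descent datum for a global splitting $D\colon T\X\to\mathrm{At}(\E_G)$, fills in precisely the dictionary the paper leaves implicit.
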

	
	Finally, we have the following comparison theorem for the existence of a connection of a principal $G$-bundle over a 
	Deligne--Mumford stack:
	
	\begin{theorem}
		Giving a connection (respectively, flat connection) on a principal $G$-bundle $\E_G$ over a Deligne--Mumford stack $\X$ 
		with \'etale atlas $x\,:\, X_0\,\longrightarrow\, \X$ is equivalent to giving a connection (respectively, flat
		connection) on the associated principal $G$-bundle $\mathbb{E}_G$ over the groupoid $\mathbb{X}\,=\,
		[X_1 \rightrightarrows X_0]$. Giving a 
		connection (respectively, flat connection) on a principal $G$-bundle $\mathbb{E}_G\,=\,[s^*E_G\rightrightarrows E_G]$ 
		over an \'etale Lie groupoid $\mathbb{X}\,=\,[X_1 \rightrightarrows X_0]$ is equivalent to giving a connection 
		(respectively, flat connection) on the associated principal $G$-bundle $\E_G$ of $\mathbb{E}_G$-torsors over the 
		classifying stack $\mathscr{B}\mathbb{X}$ of $\mathbb{X}$-torsors.
	\end{theorem}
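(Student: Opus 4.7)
The plan is to promote the equivalence of categories from Theorem \ref{thm1} to the level of connection data by a descent argument. Both assertions in the theorem are two faces of the same correspondence: once the first equivalence is established, the second follows via Theorem \ref{thm-m}, since an \'etale groupoid $\XX$ is an \'etale presentation of its classifying stack $\mathscr{B}\XX$, which is therefore a Deligne--Mumford stack. Throughout I would use the canonical identification $X_1 = X_0 \times_\X X_0$ and the associated natural $2$-isomorphism $x \circ s \simeq x \circ t$ between the two maps $X_1 \rightrightarrows \X$.

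For the direction stack $\to$ groupoid, I would start with a connection $\nabla$ on $\E_G$, and extract its component $\nabla_x$ at the chosen \'etale atlas $x\colon X_0 \to \X$, which by definition is an ordinary connection on the principal $G$-bundle $E_G = x^*\E_G \to X_0$. To verify the compatibility required in Definition \ref{dco}, I would observe that both $\widehat{s}^*\widehat{\nabla}$ and $\widehat{t}^*\widehat{\nabla}$ on $s^*E_G$ arise as pullbacks of the same stacky connection along the two $2$-isomorphic maps $x \circ s$ and $x \circ t$. The naturality clause in the definition of a connection on a stack (the pullback condition $\omega_{\E_{G,u}} = \theta_{\varphi,\alpha}^* \bar{\varphi}^* \omega_{\E_{G,v}}$) then forces the two $\g$-valued $1$-forms to coincide.

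For the direction groupoid $\to$ stack, I would construct, for each \'etale atlas $u\colon U \to \X$, a connection $\nabla_u$ on $\E_{G,u}$ by descent. Form the fiber product $W := X_0 \times_\X U$; since $x$ is \'etale and surjective, so is $p_U\colon W \to U$. Pull back the groupoid-level connection from $E_G$ along $p_{X_0}\colon W \to X_0$ to obtain a connection on $p_{X_0}^*E_G \cong p_U^*\E_{G,u}$. The descent datum lives on $W \times_U W \cong X_1 \times_\X U$, and the required agreement of the two pullbacks is precisely the groupoid condition $\widehat{s}^*\widehat{\nabla} = \widehat{t}^*\widehat{\nabla}$. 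The cocycle condition on the triple fiber product $(X_1 \times_{X_0} X_1) \times_\X U$ then follows from the associativity of the $\XX$-action on $E_G$. Varying $u$ and using the functoriality of descent yields the coherent family $\{\nabla_u\}$ required to define a connection on $\E_G$. The flat case is immediate, since curvature commutes with pullback and integrability of the corresponding horizontal distribution is a local property, so flatness is transported in both directions.

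The main technical obstacle lies in verifying that the descended $\g$-valued $1$-form on $\E_{G,u}$ is genuinely a principal connection, i.e.\ that it is $G$-equivariant and satisfies $D_{\E_{G,u}} \circ \iota_0 = \mathrm{Id}$, rather than merely a $1$-form. This reduces to showing that the Atiyah exact sequence and all its structure maps behave well under the \'etale base change $p_U$, together with the $G$-equivariance of the groupoid action built into Definition \ref{defpbo}. Once this is unwound, the remaining cocycle bookkeeping across triple overlaps is forced by the multiplication and unit axioms of $\XX$, and the equivalence of categories $\Psi, \Phi$ constructed in the proof of Theorem \ref{thm1} extends verbatim to the enriched data of bundles equipped with connections.
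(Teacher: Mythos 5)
Your proposal is correct and follows essentially the same route as the paper: both reduce the statement to the categorical equivalence of Theorem \ref{thm1} and then transport the connection data between the \'etale atlas $X_0$ and the stack $\X$, the only cosmetic difference being that you phrase the transport as descent of $G$-equivariant $\mathfrak g$-valued $1$-forms along $X_0\times_{\X}U\to U$ while the paper phrases it as a correspondence of splittings of the two Atiyah exact sequences. Your write-up actually supplies the details (the identification of the descent datum on $W\times_U W$ with the condition $\widehat{s}^*\widehat{\nabla}=\widehat{t}^*\widehat{\nabla}$, and the locality of flatness) that the paper compresses into ``unraveling the explicit constructions.''
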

	
	\begin{proof} This follows from the categorical equivalence of Theorem \ref{thm1} between the category ${\mathscr B}un_G({\mathscr X})$ of principal $G$-bundles over $\X$ and the category ${\mathbb B}un_G({\mathbb X})$ of principal $G$-bundles over the associated Lie groupoid  $\mathbb{X}\,=\,[X_1 \rightrightarrows X_0]$ and the fact that a splitting of the Atiyah exact sequence 
		of associated vector bundles over the stack $\X$ 
		$$
		\xymatrix{0\rto &\text{ad}(\E_G)\rto & \text{At}(\E_G)\rto & T\X \rto& 0}
		$$
		corresponds to a splitting of the Atiyah exact sequence of associated vector bundles over the smooth space $X_0$
		\begin{equation*}
		0\,\longrightarrow\, \text{ad}(E_G)\,\longrightarrow\,\text{At}(E_G)\,
		{\longrightarrow}\, TX_0 \,\longrightarrow\, 0\,
		\end{equation*}
		and vice versa involving the $2$-Cartesian diagram 
		\[
		\xy \xymatrix{E_G\ar[r] \ar[d]& X_0\ar[d]\\
			\E_G\ar[r]^\pi & \X}
		\endxy
		\]
		and unraveling the explicit constructions of associated bundles as given in the proof of Theorem \ref{thm1}.
	\end{proof}
	
	Let us now consider the classifying stack $\BB^{\nabla}G$ of principal $G$-bundles with connections. The objects are 
	triples $(P,\,\omega,\, S)$ where $S$ is a smooth space of $\mathfrak S$ and $P$ is a principal $G$-bundle over $S$ and 
	$\omega\,\in\,\Omega^1(S,\, \mathfrak{g})^G$ a connection $1$-form. A morphisms $(P, \,\omega,\, S)\,\longrightarrow\, (P',\, 
	\omega', \,S')$ is given by a commutative diagram
	\[
	\xy \xymatrix{ P\ar[r]^{\varphi}\ar[d]&
		P'\ar[d]\\
		S\ar[r]& S'}
	\endxy
	\]
	where $\varphi\,:\, P\,\longrightarrow\, P'$ is a $G$-equivariant map and $\varphi^*\omega'=\omega$.
	Then ${\mathscr B}^{\nabla}G$ together with the projection functor $$\pi:
	{\mathscr B}^{\nabla} G\,\longrightarrow \,{\mathfrak S}\, ,\ ~ (P,\, \omega,\, S)\,\longmapsto\, S$$
	is a groupoid fibration over $\mathfrak S$. 
	We note that ${\mathscr B}^{\nabla}G$ is, in fact, a stack as principal $G$-bundles glue and connection $1$-forms on the principal bundles glue as well (compare also \cite{CLW}).
	
	\begin{remark} 
		It is not clear in general if and under which conditions ${\mathscr B}^{\nabla}G$ is actually a differentiable or Deligne--Mumford stack and not just a stack over $\mathfrak S$.
	\end{remark}
	
	The above constructions now allow us to characterize principal $G$-bundles over differentiable stacks equivalently as follows.
	
	\begin{proposition}\label{BNabla}
		Let $\X$ be a differentiable stack. Giving a principal $G$-bundle with connection over $\X$ is equivalent to giving a morphism
		of stacks $\X\,\longrightarrow\, {\mathscr B}^{\nabla}G$ and two principal $G$-bundles with connections over $\X$
		are isomorphic if and only if the corresponding morphisms of stacks $\X\,\longrightarrow\, {\mathscr B}^{\nabla}G$ are $2$-isomorphic.
	\end{proposition}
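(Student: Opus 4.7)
The argument closely follows the pattern of Proposition \ref{propp}, enhanced by the connection data. The plan is to construct functors in both directions between the groupoid of principal $G$-bundles with connection over $\X$ and the groupoid $\Hom(\X,\,\BB^{\nabla}G)$ of morphisms of stacks, and to show these are quasi-inverse to each other.

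First, suppose $(\E_G,\,\nabla)$ is a principal $G$-bundle with connection over $\X$. For each smooth atlas $u\,:\,U\,\longrightarrow\, \X$ we have the principal $G$-bundle $\E_{G,u}\,\longrightarrow\, U$ equipped with a connection $\nabla_u$, corresponding to a $\mathfrak{g}$-valued $1$-form $\omega_{\E_{G,u}}\,\in\,\Omega^1(\E_{G,u},\,\mathfrak{g})^G$. This associates to each object $u\,:\,U\,\longrightarrow\,\X$ an object $(\E_{G,u},\,\omega_{\E_{G,u}},\,U)$ of $\BB^{\nabla}G$. For every $2$-commutative diagram of atlases with $\alpha\,:\, u\,\Rightarrow\, v\circ \varphi$, the cocycle datum $\theta_{\varphi,\alpha}\,:\,\E_{G,u}\,\stackrel{\cong}{\longrightarrow}\,\varphi^*\E_{G,v}$ satisfies $\omega_{\E_{G,u}}\,=\,\theta_{\varphi,\alpha}^*\bar{\varphi}^*\omega_{\E_{G,v}}$ by the very definition of a connection on $\E_G$, so it defines a morphism in $\BB^{\nabla}G$. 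The cocycle condition for $\theta$ translates into functoriality, yielding a morphism of stacks $F_{(\E_G,\nabla)}\,:\,\X\,\longrightarrow\,\BB^{\nabla}G$.

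Conversely, given a morphism of stacks $f\,:\,\X\,\longrightarrow\,\BB^{\nabla}G$, we use the $2$-Yoneda lemma together with the fact that $\BB^{\nabla}G$ is a stack. For each smooth atlas $u\,:\,U\,\longrightarrow\,\X$, the composition $f\circ u$ is an object of $\BB^{\nabla}G(U)$, which consists of a principal $G$-bundle $P_u\,\longrightarrow\,U$ together with a connection $1$-form $\omega_u$. Each $2$-commutative triangle of atlases gets mapped to a morphism in $\BB^{\nabla}G$, i.e., to an isomorphism $P_u\,\stackrel{\cong}{\longrightarrow}\,\varphi^*P_v$ pulling back $\omega_v$ to $\omega_u$. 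These isomorphisms satisfy the cocycle condition because $f$ is a functor, and thus assemble into a principal $G$-bundle $\E_G^f$ on $\X$ (by the description of principal bundles via atlases given after Proposition \ref{propp}) together with a connection $\nabla^f$ (since the $1$-forms glue compatibly).

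It is straightforward to verify that these two constructions are quasi-inverse to each other up to canonical $2$-isomorphism, using uniqueness of pullbacks. The statement about isomorphisms follows similarly: a $2$-morphism $f\,\Rightarrow\, f'$ between two morphisms $\X\,\longrightarrow\,\BB^{\nabla}G$ consists, atlas by atlas, of a morphism in $\BB^{\nabla}G(U)$, namely a $G$-equivariant bundle isomorphism $P_u\,\longrightarrow\,P'_u$ intertwining the connection $1$-forms; this is precisely the data of an isomorphism of principal $G$-bundles with connection, and vice versa. The main bookkeeping obstacle is verifying the compatibility of the cocycle data $\theta_{\varphi,\alpha}$ with the connection $1$-forms across all $2$-commutative triangles of atlases, but this reduces to the naturality built into the definitions of both a connection on $\E_G$ over $\X$ and of a morphism in $\BB^{\nabla}G$.
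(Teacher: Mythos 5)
Your argument is correct and is exactly the argument the paper has in mind: the paper offers no written proof of Proposition \ref{BNabla}, presenting it as an immediate consequence of the atlas-wise description of principal bundles with connection and of the stack property of ${\mathscr B}^{\nabla}G$, and your write-up simply supplies the routine details (the two quasi-inverse functors and the matching of $2$-morphisms with connection-preserving bundle isomorphisms) in the same way Proposition \ref{propp} is handled for bundles without connection. No genuinely different route is taken and no step is missing beyond the level of detail the paper itself omits.
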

	
	Similarly, as before, we can consider the groupoid fibration ${\mathscr B}un_G^{\nabla}({\mathscr X})$ over $\mathfrak S$ whose objects over a smooth space $U$ are principal $G$-bundles ${\mathscr E}_G$ over ${\mathscr X} \times U$ with connections and whose morphisms are given by pullback diagrams of principal $G$-bundles with connections as above. From Proposition \ref{BNabla} we get therefore the following:
	
	\begin{proposition}
		There is an equivalence of groupoid fibrations over $\mathfrak S$
		$${\mathscr B}un_G^{\nabla}({\mathscr X})\,\cong\, {\mathscr H}om(\X,\, {\mathscr B}^{\nabla}G)\, .$$
	\end{proposition}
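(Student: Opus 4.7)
The plan is to deduce this proposition directly from Proposition~\ref{BNabla}, essentially by applying it parametrically in the base smooth space~$U$, exactly as the analogous unconnected statement ${\mathscr B}un_G({\mathscr X}) \cong {\mathscr H}om(\X, {\mathscr B}G)$ follows from Proposition~\ref{propp}.

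First I would set up the functor in the forward direction. An object of ${\mathscr B}un_G^{\nabla}({\mathscr X})$ over a smooth space~$U$ is a principal $G$-bundle with connection $({\mathscr E}_G, \nabla)$ over ${\mathscr X}\times U$. By Proposition~\ref{BNabla} applied to the differentiable stack ${\mathscr X}\times U$, this datum is equivalent to a morphism of stacks ${\mathscr X}\times U \longrightarrow {\mathscr B}^{\nabla}G$, which is precisely an object of ${\mathscr H}om({\mathscr X}, {\mathscr B}^{\nabla}G)(U)$. On morphisms, a pullback square in ${\mathscr B}un_G^{\nabla}({\mathscr X})$ that respects the connection $1$-forms corresponds, again via Proposition~\ref{BNabla}, to a $2$-isomorphism between the associated classifying morphisms, so the assignment extends to a functor between the fibers over~$U$.

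Next I would verify naturality in $U$. Given a morphism $U'\longrightarrow U$ in~${\mathfrak S}$, pullback of a principal $G$-bundle with connection along ${\mathscr X}\times U'\longrightarrow {\mathscr X}\times U$ corresponds on the $\mathrm{Hom}$-side to precomposition of the classifying morphism; this is the universal property of the classifying stack and is automatic once the object-level correspondence has been established. Hence the forward construction assembles into a morphism of groupoid fibrations over~${\mathfrak S}$.

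For the inverse direction, given a morphism of stacks $\varphi\colon {\mathscr X}\times U\longrightarrow {\mathscr B}^{\nabla}G$, pulling back the universal principal $G$-bundle with connection on ${\mathscr B}^{\nabla}G$ yields an object of ${\mathscr B}un_G^{\nabla}({\mathscr X})(U)$; $2$-isomorphisms go to morphisms in ${\mathscr B}un_G^{\nabla}({\mathscr X})$. The two constructions are mutually quasi-inverse by the equivalence part of Proposition~\ref{BNabla}, completing the equivalence of groupoid fibrations. The only slightly delicate point in the argument is tracking that the compatibility data $\theta_{\varphi,\alpha}$ for an atlas-by-atlas description of a connection really is encoded in a single morphism into ${\mathscr B}^{\nabla}G$; this, however, is exactly the content of the gluing property of connection $1$-forms used to prove that ${\mathscr B}^{\nabla}G$ is a stack, which was already invoked in the paragraph preceding Proposition~\ref{BNabla}.
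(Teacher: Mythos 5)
Your argument is correct and is essentially the paper's own proof: the paper states this proposition with only the phrase ``From Proposition \ref{BNabla} we get therefore the following,'' and the intended content is exactly your parametric application of Proposition \ref{BNabla} to the stacks ${\mathscr X}\times U$, together with the (routine) naturality in $U$. Your fleshed-out version, including the observation that the atlas-by-atlas compatibility data $\theta_{\varphi,\alpha}$ is absorbed by the gluing property already used to show ${\mathscr B}^{\nabla}G$ is a stack, supplies precisely the details the paper leaves implicit.
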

	
	Collier--Lerman--Wolbert \cite[Theorem 6.4]{CLW} proved a similar result involving holonomy and parallel transport for principal $G$-bundles over stacks.
	
	\section*{Acknowledgements}
	
The authors would like
to thank the anonymous referee for very useful comments and suggestions.
The first author acknowledges the support of a J. C. Bose Fellowship. The second author acknowledges research support 
from SERB, DST, Government of India grant MTR/2018/000528. The fourth author would like to thank the Tata Institute 
of Fundamental Research (TIFR) in Mumbai and the University of Leicester for financial support.

\end{document}